\DeclareMathOperator{\diag}{diag}
\renewcommand{\Re}{\mathrm{Re}\,}
\newcommand{\ud}{\,\mathrm{d}}
\def\supp{\mathop{\mathrm{supp}}\nolimits}
\newtheorem{theorem}{Theorem}[section]
\newtheorem{proposition}[theorem]{Proposition}
\newtheorem{rhp}[theorem]{RH problem}
\theoremstyle{definition}
\newtheorem{definition}[theorem]{Definition}
\theoremstyle{remark}
\numberwithin{equation}{section}
\begin{document}

\title{Mixed type multiple orthogonal polynomials associated with the modified Bessel functions and products of two
coupled random matrices}
\author{Lun Zhang\footnotemark[1]}
\date{\today}

\maketitle
\renewcommand{\thefootnote}{\fnsymbol{footnote}}
\footnotetext[1]{School of Mathematical Sciences and Shanghai Key Laboratory for Contemporary Applied Mathematics, Fudan University, Shanghai 200433, People's Republic of China. E-mail:
lunzhang@fudan.edu.cn}

\begin{abstract}
We consider mixed type multiple orthogonal polynomials associated with a system of weight functions consisting of two vectors. One vector is defined in terms of scaled modified Bessel function of the first kind $I_\mu$ and $I_{\mu+1}$, the other vector is defined in terms of scaled modified Bessel function of the second kind $K_\nu$ and $K_{\nu+1}$. We show that the corresponding mixed type multiple orthogonal polynomials exist. For the special case that each multi-index is on or close to the diagonal, basic properties of the polynomials and their linear forms are investigated, which include explicit formulas, integral representations, differential properties, limiting forms and recurrence relations. It comes out that, for specified parameters, the linear forms of these mixed type multiple orthogonal polynomials can be interpreted as biorthogonal functions encountering in recent studies of products of two coupled random matrices. This particularly implies a Riemann-Hilbert characterization of the correlation kernel, which provides an alternative way for further asymptotic analysis.

\vspace{2mm} \textbf{Keywords:} mixed type multiple orthogonal polynomials, modified Bessel functions,
integral representations, limiting forms, recurrence relations, random matrices
\end{abstract}

\section{Introduction}\label{sec:intro}
Multiple orthogonal polynomials are polynomials of one variable which are defined by orthogonality conditions with respect to several different weights. The general definition requires two sets of functions defined on the real axis $\mathbb{R}$:
\begin{equation*}
\textbf{w}_1=(w_{1,1},\ldots, w_{1,p}), \qquad \textbf{w}_2=(w_{2,1},\ldots, w_{2,q}),
\end{equation*}
where $p,q \in\mathbb{N}=\{1,2,3,\ldots\}$
and two multi-indices
$$\textbf{n}_1=(n_{1,1},\ldots, n_{1,p}) \in \mathbb{Z}_+^{p}, \qquad \textbf{n}_2=(n_{2,1},\ldots, n_{2,q}) \in \mathbb{Z}_+^{q},$$
where $\mathbb{Z}_+=\mathbb{N} \cup \{ 0 \}$. Following the usual vector notation, we set
$$|\textbf{n}_1|:=\sum_{i=1}^p n_{1,i},\qquad |\textbf{n}_2|:=\sum_{i=1}^q n_{2,i}.$$
It is assumed that
\begin{equation}\label{eq:multindexcond}
|\textbf{n}_1|=|\textbf{n}_2|+1,
\end{equation}
and each function $w_{1,i}w_{2,j}$, $i=1,\ldots,p$, $j=1,\ldots,q$ is a weight function on $\mathbb{R}$. Following \cite{DK,Sorokin94}, we have the following definitions.

\begin{definition}\label{def:MMOP}
Let $A_{\textbf{n}_1,\textbf{n}_2,1}, \ldots, A_{\textbf{n}_1,\textbf{n}_2,p}$ be a system of $p$ polynomials.
They are called mixed type multiple orthogonal polynomials with respect to the pair of multi-indices $(\textbf{n}_1,\textbf{n}_2)$ and the two vectors $\textbf{w}_1$ and $\textbf{w}_2$ if the following conditions hold:
\begin{enumerate}
  \item [$(\textrm i )$] $\deg A_{\textbf{n}_1,\textbf{n}_2,j}\leq n_{1,j}-1,$  \textrm{for $j=1,\ldots,p$, not all identically equal to zero (when $\deg A_{\textbf{n}_1,\textbf{n}_2,j}=-1$ we assume that $ A_{\textbf{n}_1,\textbf{n}_2,j} \equiv 0$)},
  \item [$(\textrm {ii} )$]
the linear form of these polynomials
\begin{equation}\label{def:Qn1}
Q_{\textbf{n}_1,\textbf{n}_2}(x):=\sum_{i=1}^p A_{\textbf{n}_1,\textbf{n}_2,i}(x)w_{1,i}(x)
\end{equation}
satisfies the orthogonality conditions
\begin{equation}\label{eq:orthocond}
\int Q_{\textbf{n}_1,\textbf{n}_2}(x)x^jw_{2,k}(x)\ud x=0, ~\textrm{for $k=1,\ldots,q$ and $j=0,1,\ldots,n_{2,k}-1$}.
\end{equation}
\end{enumerate}
\end{definition}

When $q=1$, the polynomials $A_{\textbf{n}_1,\textbf{n}_2,1}, \ldots, A_{\textbf{n}_1,\textbf{n}_2,p}$ are called type I multiple orthogonal polynomials, if $p=1$, the polynomial $A_{\textbf{n}_1,\textbf{n}_2,1}$ is called a type II multiple orthogonal polynomial. The standard orthogonality corresponds to the case $p=q=1$. Hence, the function $Q_{\textbf{n}_1,\textbf{n}_2}$ defined in \eqref{def:Qn1} admits a type I linear combination (with respect to $\textbf{n}_1$ and $\textbf{w}_1$), but satisfies type II orthogonality conditions (with respect to $\textbf{n}_2$ and $\textbf{w}_2$).

The equations \eqref{eq:orthocond} are a linear homogeneous system of $|\textbf{n}_2|$ equations for the $|\textbf{n}_1|$ unknown coefficients of the polynomials $A_{\textbf{n}_1,\textbf{n}_2,1},\ldots, A_{\textbf{n}_1,\textbf{n}_2,p}$. Due to the assumption \eqref{eq:multindexcond}, there is always a non-zero solution.

\begin{definition}
A pair of multi-indices $(\textbf{n}_1,\textbf{n}_2)$ is called \textit{normal}, if every solution to conditions $(\textrm{i})-(\textrm{ii})$ satisfies $\deg A_{\textbf{n}_1,\textbf{n}_2,j}=n_{1,j}-1,~j=1,\ldots,p$. If every pair of multi-indices $(\textbf{n}_1,\textbf{n}_2) \in \mathbb{Z}_+^{p} \times \mathbb{Z}_+^{q}$ is normal, we say that the pair $(\textbf{w}_1,\textbf{w}_2)$ is \textit{perfect}.
\end{definition}

For a normal pair of indices, it is easy to verify that the polynomials $A_{\textbf{n}_1,\textbf{n}_2,j}$, $j=1,\ldots,p$ are unique up to a constant factor.

From Definition \ref{def:MMOP}, it is clear that the roles of two vectors $\textbf{w}_1$ and $\textbf{w}_2$ are not symmetric. One could also swap their roles, which leads to the following notion of duality \cite{DS94}.

\begin{definition}\label{def:dualMOP}
Let $$\textbf{e}_j=(0,\ldots,0,1,0,\ldots,0)$$ be the $j$-th standard unit vector with $1$ on the $j$-th entry. Given a pair of multi-indices $(\textbf{n}_1,\textbf{n}_2)$ satisfying \eqref{eq:multindexcond}, the polynomials $B_{\textbf{n}_2+\textbf{e}_i,\textbf{n}_1-\textbf{e}_j,1}, \ldots, B_{\textbf{n}_2+\textbf{e}_i,\textbf{n}_1-\textbf{e}_j,q}$ are dual to the polynomials $A_{\textbf{n}_1,\textbf{n}_2,1}, \ldots, A_{\textbf{n}_1,\textbf{n}_2,p}$ if they are mixed type multiple orthogonal polynomials with respect to the pair of multi-indices $(\textbf{n}_2+\textbf{e}_i,\textbf{n}_1-\textbf{e}_j)$ and the two vectors $\textbf{w}_2$ and $\textbf{w}_1$ for some $\textbf{e}_i \in \mathbb{Z}_+^q$ and $\textbf{e}_j \in \mathbb{Z}_+^p$.
\end{definition}

Hence, if we define the following function similar to \eqref{def:Qn1}
\begin{equation}\label{def:Pn1}
P_{\textbf{n}_2+\textbf{e}_i,\textbf{n}_1-\textbf{e}_j}(x):=\sum_{k=1}^q B_{\textbf{n}_2+\textbf{e}_i,\textbf{n}_1-\textbf{e}_j,k}(x)w_{2,k}(x),
\end{equation}
it satisfies the orthogonality conditions
\begin{equation}
\int P_{\textbf{n}_2+\textbf{e}_i,\textbf{n}_1-\textbf{e}_j}(x)x^l w_{1,k}(x) \ud x =0
\end{equation}
for $k=1,\ldots,p$ and
$$l=\left\{
      \begin{array}{ll}
        0,1,\ldots,n_{1,k}-1, & \hbox{if $ k \neq j$,} \\
        0,1,\ldots,n_{1,j}-2, & \hbox{if $ k = j$.}
      \end{array}
    \right.
$$
This, together with \eqref{def:Qn1}, also implies that
\begin{equation}
\int  Q_{\textbf{r},\textbf{s}}(x)P_{\textbf{n}_2+\textbf{e}_i,\textbf{n}_1-\textbf{e}_j}(x) \ud x=0, ~\textrm{if $\textbf{r} \leq \textbf{n}_1-\textbf{e}_j$ or $\textbf{s} \geq \textbf{n}_2+\textbf{e}_i$},
\end{equation}
where the inequalities for multi-indices are understood in a componentwise manner.

If the pair of weights $(\textbf{w}_1,\textbf{w}_2)$ is perfect, so is the pair $(\textbf{w}_2,\textbf{w}_1)$; see \cite[Proposition 2.1]{FMM13}. Hence, for a normal pair of indices, the dual polynomials are also unique up to a multiplicative constant.

As a generalization of orthogonal polynomials, multiple orthogonal polynomials originated from Hermite-Pad\'{e} approximation in the context of irrationality and transcendence proofs in number theory. They are further developed in
approximation theory (cf. \cite{Apt98,WVA06} and references therein) and have played an important role nowadays in the studies of stochastic models (e.g. random matrix theory, non-intersecting paths, etc.) arising from mathematical physics;
cf. \cite{Kui10a,Kui10b} and references therein.

There are various families of multiple orthogonal polynomials which extend the classical orthogonal polynomials (cf. \cite{ABVAN03} and \cite[Chapter 23]{Ismail}). The aim of this paper, however, is to investigate mixed type multiple orthogonal polynomials associated with non-classical weights, namely, the modified Bessel functions. Our motivation is twofold:

On the one hand, although the polynomials orthogonal with respect to the modified Bessel functions in the usual sense do not exhibit nice properties as the classical ones, the associated multiple orthogonal polynomials indeed have some interesting properties, and remarkably, they are all closely related to some stochastic models. The type I and type II multiple orthogonal polynomials associated with the modified Bessel functions of the second kind were introduced and studied in \cite{CD00,CCVA08,CVA2001,VAY00,ZP}. Their connections to the products of independent Ginibre matrices have been revealed recently in \cite{Kuijlaars-Zhang14,Zhang13}. For the the modified Bessel functions of the first kind, the associated type I and type II multiple orthogonal polynomials were introduced and studied in \cite{CD00,CouVan03,CouVan03b,KuiRom10}. These polynomials are of significant assistance in establishing local universality in non-intersecting Bessel paths \cite{KMW09}; see also \cite{DKRZ12} for the case of mixed type. The difference between the present paper and those aforementioned is that the system of weight functions considered here is also of `mixed' type. One vector $(\omega_{\mu,a},\omega_{\mu+1,a})$ is defined in terms of scaled modified Bessel function of the first kind $I_\mu$ and $I_{\mu+1}$, while the other vector $(\rho_{\nu,b},\rho_{\nu+1,b})$ is defined in terms of scaled modified Bessel function of the second kind $K_\nu$ and $K_{\nu+1}$.

On the other hand, these mixed type multiple orthogonal polynomials arise naturally in recent studies of products of two coupled random matrices \cite{AS15,Liu16}. More precisely, let us consider, in a simple case, two independent matrices $A$ and $B$ of size $n\times M$ ($M\geq n$) with i.i.d. standard complex Gaussian entries. We then define two random matrices
\begin{equation}\label{eq:X1andX2}
X_1:=\frac{1}{\sqrt{2}}(A-i\sqrt{\tau}B),\qquad X_2:=\frac{1}{\sqrt{2}}(A^*-i\sqrt{\tau}B^*), \qquad 0< \tau < 1,
\end{equation}
where the superscript $^*$ stands for the conjugate transpose. Clearly, the matrices $X_1$ and $X_2$ are not independent anymore. It was shown in \cite{AS15} that the squared singular values of $X_1X_2$ form a determinantal point process with correlation kernel
\begin{equation}\label{def:Kn}
    K_n(x,y) = \sum_{k=0}^{n-1} \mathcal{Q}_k(x) \mathcal{P}_k(y),
    \end{equation}
where for each $k = 0, 1, \ldots$, $\mathcal{Q}_k$ belongs to the linear span of certain functions built from scaled modified Bessel functions of the first kind, while $\mathcal{P}_k$ belongs to the linear span of certain functions built from scaled modified Bessel functions of the second kind in such a way that
\begin{equation}\label{eq:biortho}
    \int_0^{\infty} \mathcal{Q}_k(x) \mathcal{P}_j(x) \ud x = \delta_{j,k}.
    \end{equation}
It comes out that the biorthogonal functions $\mathcal{Q}_k$ and $\mathcal{P}_k$ can be interpreted as the linear forms of our mixed type multiple orthogonal polynomials with specified parameters. In this case, some properties of $\mathcal{Q}_k$ and $\mathcal{P}_k$ have already been established in \cite{AS15}, but without noting the multiple orthogonality and the mixed type orthogonal polynomials. This connection also holds if the pair $(X_1,X_2)$ is distributed according to a coupled two-matrix matrix model introduced in \cite{Liu16}. We will come back to this issue at the end of this paper.

The rest of this paper is organized as follows. The precise definitions of the modified Bessel functions are given in Section \ref{sec:MBF}, where we also collect some of their basic properties for later use.
After fixing some notations used throughout this paper in Section \ref{sec:MMOP}, we show in Section \ref{sec:normality}
that the corresponding mixed type multiple orthogonal polynomials are uniquely determined except for a constant factor.
This mainly follows from the facts that both vectors form algebraic Chebyshev (AT) systems (actually, Nikishin systems),
which are already known for special cases \cite{CouVan03,VAY00}, and can be easily extended to general situations.
We include a proof based on the criteria established in \cite{CouVan03}. We then turn to the special case that each multi-index
is on or close to the diagonal. Basic properties of the polynomials as well as the dual polynomials, and their linear forms are
investigated, which include explicit formulas, integral representations,
differential properties (Section \ref{sec:expli formula}), limiting forms (Section \ref{sec:limitingform}) and recurrence
relations (Section \ref{sec:recurrence rel}). We finally explain how our mixed type multiple orthogonal polynomials are related
to products of two coupled random matrices in a general setting \cite{Liu16}. This connection particularly implies a Riemann-Hilbert (RH)
characterization of the correlation kernel, which provides an alternative way for further asymptotic analysis.

\section{Modified Bessel functions}\label{sec:MBF}

\subsection{Modified Bessel functions of the first kind}
The modified Bessel function of the first kind $I_\mu$ (see \cite[Section 10.25]{DLMF}) is defined by
\begin{equation}\label{def:I}
I_{\mu}(z)=\left(\frac{z}{2}\right)^\mu\sum_{k=0}^\infty\frac{(z^2/4)^k}{k!\Gamma(\mu+k+1)}, \quad \mu\in\mathbb{R},
\end{equation}
which is analytic in the complex plane with a cut along the negative real axis, and is a real positive function for $\mu>-1$ and $z>0$. The function $I_\mu$ satisfies the modified Bessel's equation
\begin{equation}\label{def:MBequation}
z^2\frac{\ud^2 u}{\ud z^2}+z\frac{\ud u}{\ud z}-(z^2+\mu^2)u=0.
\end{equation}

Some basic properties of $I_\mu$ that are relevant to this paper include:
\begin{itemize}
  \item recurrence relations (see \cite[Equation 10.29.1]{DLMF}):
\begin{align}
I_{\mu-1}(z)-I_{\mu+1}(z) & =\frac{2\mu}{z}I_{\mu}(z), \label{eq:recurI1}\\
I_{\mu-1}(z)+I_{\mu+1}(z) & = 2I_{\mu}'(z) \label{eq:recurI2};
\end{align}
  \item asymptotic behavior (see \cite[Equations 10.30.1 and 10.40.1]{DLMF}):
\begin{align}
I_{\mu}(z)&\sim\left(\frac{z}{2}\right)^\mu/\Gamma(\mu+1),\quad z\to 0, \quad \mu>-1,
\\
I_{\mu}(z)&= \frac{e^z}{\sqrt{2 \pi z}}\left(1+O\left(\frac{1}{z}\right)\right),\quad z\to \infty,
\quad |\arg z|<\frac{\pi}{2}. \label{eq:asyI2}
\end{align}
\end{itemize}

We now define scaled modified Bessel functions of the first kind $\omega_{\mu,a}$ $(\mu>-1,a>0)$ as follows:
\begin{equation}\label{def:omega}
\omega_{\mu,a}(x)=x^{\frac{\mu}{2}}I_{\mu}(2a\sqrt{x}), \quad x>0.
\end{equation}
Note that $\omega_{\mu,a}(x)$ is positive over the positive real axis. In view of \eqref{eq:recurI1}--\eqref{eq:asyI2}, it is readily seen that the functions $\omega_{\mu,a}$ satisfy
\begin{align}
\omega_{\mu+1,a}(x) & = x \omega_{\mu-1,a}(x)-\frac{\mu}{a}\omega_{\mu,a}(x),  \label{eq:recuromega1}\\
\omega_{\mu+1,a}'(x) & = a \omega_{\mu,a}(x), \label{eq:recuromega2}
\end{align}
and have the asymptotic behavior:
\begin{align}
\omega_{\mu,a}(x)&\sim (ax)^\mu/\Gamma(\mu+1),\quad x\to 0, \label{eq:asyomega1}
\\
\omega_{\mu,a}(x)&\sim \frac{x^{\frac{2\mu-1}{4}}e^{2a\sqrt{x}}}{2\sqrt{\pi a}},\quad x\to +\infty. \label{eq:asyomega2}
\end{align}

The recurrence relation \eqref{eq:recuromega1} particularly implies that $\omega_{\mu+m,a}$ can be expanded in terms of $\omega_{\mu,a}$ and $\omega_{\mu+1,a}$. Recall that the generalized hypergeometric function $_p F_q$ is defined by
 \begin{equation}\label{def:hypergeo}
 {\; }_p F_q \left({a_1,\ldots, a_p \atop b_1,\ldots,b_q} \Big{|} z \right)=\sum_{k=0}^\infty \frac{(a_1)_k\cdots (a_p)_k}{(b_1)_k \cdots (b_q)_k}\frac{z^k}{k!}
\end{equation}
with
\begin{equation}\label{eq:pochammer}
(a)_k=\frac{\Gamma(a+k)}{\Gamma(a)}=a(a+1)\cdots(a+k-1)
\end{equation}
being the Pochhammer symbol. We have 
\begin{proposition}\label{prop:omegahigh}
For any integer $m\geq 0$, it follows that
\begin{equation}\label{eq:recuromegam}
\omega_{\mu+m,a}(x)=a^{-m} r_{m,\mu}(a^2 x)\omega_{\mu,a}(x)+a^{1-m}s_{m,\mu}(a^2 x)\omega_{\mu+1,a}(x)
\end{equation}
where $ r_{0,\mu}(x)=1$, $ s_{0,\mu}(x)=0$, and for $m\geq 1$,
\begin{align}
r_{m,\mu}(x)&=-(-i)^mx^{m/2}h_{m-2,\mu+2}((2\sqrt{x}i)^{-1}) \nonumber \\
&=(-1)^m(\mu+2)_{m-2}{\; }_2 F_3 \left({-(m-2)/2,-(m-3)/2 \atop \mu+2,-m+2,1-m-\mu} \Big{|} 4x \right)x \nonumber \\
&=(-1)^m\sum_{j=0}^{\lfloor (m-2)/2 \rfloor}\binom{m-j-2}{j}(\mu+j+2)_{m-2j-2}x^{j+1}, \label{def:rmmu}
\end{align}
and
\begin{align}
s_{m,\mu}(x)&=(-i)^{m-1}x^{(m-1)/2}h_{m-1,\mu+1}((2\sqrt{x}i)^{-1}) \nonumber \\
&=(-1)^{m-1}(\mu+1)_{m-1}{\; }_2 F_3 \left({-(m-1)/2,-(m-2)/2 \atop \mu+1,-m+1,1-m-\mu} \Big{|} 4x \right) \nonumber \\
&=(-1)^{m+1}\sum_{j=0}^{\lfloor (m-1)/2 \rfloor}\binom{m-j-1}{j}(\mu+j+1)_{m-2j-1}x^{j}. \label{def:smmu}
\end{align}
Here, the functions $h_{m,\mu}$ are Lommel polynomials (see \cite[Sections 9.6--9.73]{Watson66}\footnotemark[2]) and $\lfloor x \rfloor=\max\{n\in\mathbb{Z}:n\leq x\}$ stands for the integer part of $x$.
\end{proposition}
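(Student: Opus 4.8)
The plan is to prove \eqref{eq:recuromegam} by induction on $m$, and then to match the resulting coefficients against the three closed forms one at a time. The engine of the induction is the \emph{ascending} three-term recurrence obtained by replacing $\mu$ with $\mu+m-1$ in \eqref{eq:recuromega1},
\[
\omega_{\mu+m,a}(x)=x\,\omega_{\mu+m-2,a}(x)-\frac{\mu+m-1}{a}\,\omega_{\mu+m-1,a}(x),\qquad m\geq 2,
\]
which shows that iterating upward expresses every $\omega_{\mu+m,a}$ through the two lowest functions $\omega_{\mu,a}$ and $\omega_{\mu+1,a}$.

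First I would define the coefficient polynomials intrinsically. Writing $y=a^2x$, I claim that both $r_{m,\mu}$ and $s_{m,\mu}$ satisfy the single second-order recurrence
\[
p_{m,\mu}(y)=y\,p_{m-2,\mu}(y)-(\mu+m-1)\,p_{m-1,\mu}(y),\qquad m\geq 2,
\]
with initial data $r_{0,\mu}=1,\ r_{1,\mu}=0$ for $r$, and $s_{0,\mu}=0,\ s_{1,\mu}=1$ for $s$. Taking these as the definition of $r_{m,\mu},s_{m,\mu}$, the expansion \eqref{eq:recuromegam} follows by a two-step induction: the cases $m=0,1$ are immediate, and for the inductive step one substitutes the expansions of $\omega_{\mu+m-1,a}$ and $\omega_{\mu+m-2,a}$ into the ascending recurrence and collects the coefficients of $\omega_{\mu,a}$ and $\omega_{\mu+1,a}$. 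The powers of $a$ telescope and the factor $x$ becomes $y=a^2x$, so that the two collected coefficients are exactly $a^{-m}r_{m,\mu}(y)$ and $a^{1-m}s_{m,\mu}(y)$ by the defining recurrence. No linear independence of $\omega_{\mu,a}$ and $\omega_{\mu+1,a}$ is needed for this, since one only has to verify an identity; independence, which follows from the AT-system property used in Section \ref{sec:normality}, would be relevant only if one wanted uniqueness of the coefficients.

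It then remains to identify these recurrence-defined polynomials with the displayed formulas, and the cleanest route is through the Lommel polynomials. Recall that $h_{m,\nu}$ satisfies $h_{m+1,\nu}(w)=2(\nu+m)w\,h_{m,\nu}(w)-h_{m-1,\nu}(w)$ with $h_{0,\nu}=1$ and $h_{-1,\nu}=0$. Substituting $w=(2\sqrt{x}\,i)^{-1}$, so that $2\sqrt{x}\,w=-i$, I would check that $-(-i)^m x^{m/2}h_{m-2,\mu+2}(w)$ and $(-i)^{m-1}x^{(m-1)/2}h_{m-1,\mu+1}(w)$ obey the recurrence of the previous paragraph with the $r$- and $s$-data respectively: the Lommel recurrence taken at index $m-3$ (resp.\ $m-2$), together with the parameter shift $\nu=\mu+2$ (resp.\ $\nu=\mu+1$), converts the factor $2(\nu+k)w$ into $-(\mu+m-1)$ once the powers of $i$ and the half-integer powers of $x$ are cleared. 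This yields the first equalities in \eqref{def:rmmu} and \eqref{def:smmu}. The second (hypergeometric) and third (finite-sum) forms then come from Watson's series representation of $h_{m,\nu}$ \cite[Sections 9.6--9.73]{Watson66}: inserting $w=(2\sqrt{x}\,i)^{-1}$ cancels the $i$-powers and produces precisely the finite sums displayed in \eqref{def:rmmu}--\eqref{def:smmu}, and recognizing the quotient of consecutive summands as a ratio of Pochhammer symbols rewrites each sum as the stated ${}_2F_3$.

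The hard part will be the bookkeeping rather than any conceptual difficulty: one must keep the branch-dependent factors $(-i)^m$, the half-integer powers $x^{m/2}$, and the two distinct Lommel shifts $\mu+2$ and $\mu+1$ mutually consistent across the three recurrences (those for $\omega$, for the coefficients, and for the Lommel polynomials). The low-order cases deserve separate attention, since there the Lommel polynomials with index $\leq 0$ appear and the finite sums degenerate (for instance $r_{1,\mu}=0$ is the empty sum $\sum_{j=0}^{\lfloor-1/2\rfloor}$, while $r_{2,\mu}(y)=y$ and $s_{2,\mu}(y)=-(\mu+1)$ must be matched by hand); checking these pins down the base of the induction so that the three forms agree everywhere and not merely as solutions of the same recurrence. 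Finally, one should confirm that the binomial-to-${}_2F_3$ rewriting produces the correct terminating denominator parameters, namely $-m+2$ and $1-m-\mu$ in \eqref{def:rmmu} and $-m+1$ and $1-m-\mu$ in \eqref{def:smmu}.
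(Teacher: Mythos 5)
Your proposal is correct, but it takes a genuinely different route from the paper. The paper's proof is essentially a two-line reduction: it cites \cite[Lemma 5]{CouVan03}, which establishes exactly the expansion \eqref{eq:recuromegam} in the special case $a=1$ with the coefficients \eqref{def:rmmu}--\eqref{def:smmu}, and then obtains the general case from the scaling identity $\omega_{\mu,1}(a^2x)=a^{\mu}\omega_{\mu,a}(x)$ (which also explains the powers $a^{-m}$ and $a^{1-m}$ at a glance). The underlying result in \cite{CouVan03} is itself proved via the classical expansion $J_{\mu+m}(x)=h_{m,\mu}(1/x)J_{\mu}(x)-h_{m-1,\mu+1}(1/x)J_{\mu-1}(x)$ together with $I_{\mu}(x)=e^{-\mu\pi i/2}J_{\mu}(xi)$, i.e., by transporting a known Bessel--Lommel identity to the modified Bessel functions. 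Your argument instead re-derives everything from scratch: you extract from \eqref{eq:recuromega1} the common second-order recurrence $p_{m,\mu}(y)=y\,p_{m-2,\mu}(y)-(\mu+m-1)p_{m-1,\mu}(y)$ for the coefficient polynomials, prove \eqref{eq:recuromegam} by a two-step induction (correctly noting that no linear independence of $\omega_{\mu,a},\omega_{\mu+1,a}$ is needed), and then identify the recurrence-defined coefficients with the Lommel forms by checking that $-(-i)^m x^{m/2}h_{m-2,\mu+2}\bigl((2\sqrt{x}\,i)^{-1}\bigr)$ and $(-i)^{m-1}x^{(m-1)/2}h_{m-1,\mu+1}\bigl((2\sqrt{x}\,i)^{-1}\bigr)$ satisfy the same recurrence with matching initial data $(1,0)$ and $(0,1)$; I have verified that the sign and $i$-power bookkeeping in these checks works out, including the base cases $r_{2,\mu}(y)=y$, $s_{2,\mu}(y)=-(\mu+1)$, and the conversion via Watson's series for $h_{m,\nu}$ to the finite sums (the identification $\Gamma(\mu+m-j)/\Gamma(\mu+j+1)=(\mu+j+1)_{m-2j-1}$ and its analogue produce exactly the stated Pochhammer forms). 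What your approach buys is self-containedness: the reader need not consult \cite{CouVan03}, the general $a$ is handled directly without the rescaling step, and the origin of the coefficient recurrence is transparent. What it costs is length and the delicate half-integer-power and $(-i)^m$ bookkeeping, which the paper's citation-plus-scaling argument avoids entirely; in effect you are re-proving the cited lemma (by a cleaner induction than the original proof) rather than using it.
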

\footnotetext[2] {In the notation $R_{m,\mu}$ of \cite{Watson66}, one has $h_{m,\mu}(x)=R_{m,\mu}(1/x)$.}
\begin{proof}
For the special case $a=1$, the relation \eqref{eq:recuromegam} is proved in \cite[Lemma 5]{CouVan03}, i.e.,
\begin{equation}\label{eq:reca=1}
\omega_{\mu+m,1}(x)= r_{m,\mu}(x)\omega_{\mu,1}(x)+s_{m,\mu}(x)\omega_{\mu+1,1}(x)
\end{equation}
with $ r_{0,\mu}(x)=1$, $ s_{0,\mu}(x)=0$, and for $m\geq 1$, $r_{m,\mu}(x)$ and $s_{m,\mu}(x)$
are given by \eqref{def:rmmu} and \eqref{def:smmu}, respectively.
This, together with the fact that
$$\omega_{\mu,1}(a^2x)=a^{\mu}\omega_{\mu,a}(x)$$
gives us \eqref{eq:recuromegam}.

This completes the proof of Proposition \ref{prop:omegahigh}.
\end{proof}
The Lommel polynomials $h_{m,\mu}$ satisfy the difference equation
$$h_{m+1,\mu}(x)=2x(m+\mu)h_{m,\mu}(x)-h_{m-1,\mu}(x)$$
with initial conditions $h_{-1,\mu}(x)=0$, $h_{0,\mu}(x)=1$, and have the hypergeometric representation (for $m>1$)
$$h_{m,\mu}(x)=(\mu)_m(2x)^m{\; }_2 F_3 \left({-m/2,-(m-1)/2 \atop \mu,-m,1-m-\mu} \Big{|} -1/x^2 \right);$$
see \cite[Chapter VI.6]{Chihara}.
The proof of \eqref{eq:reca=1} in \cite{CouVan03} relies on the facts that
\begin{equation}\label{eq:recuJ}
J_{\mu+m}(x)=h_{m,\mu}(1/x)J_{\mu}(x)-h_{m-1,\mu+1}(1/x)J_{\mu-1}(x)
\end{equation}
and
$$I_{\mu}(x)=e^{-(\mu \pi i)/2}J_{\mu}(xi),\quad x>0,$$
with $J_{\mu}$ being the Bessel function of the first kind (cf. \cite[Section 10.2]{DLMF}).

\subsection{Modified Bessel functions of the second kind}
The modified Bessel functions of the second kind $K_\nu$ (see \cite[Section 10.25]{DLMF}) are also known as Macdonald functions. $K_\nu$ and $I_\nu$ are two linearly independent solutions of \eqref{def:MBequation} (with $\mu$ replaced by $\nu$).  Like $I_\nu(z)$, $K_{\nu}(z)$ is also analytic in the complex plane with a cut along the negative real axis, and is a real positive function for $\nu>-1$ and $z>0$.

Some basic properties of $K_\nu$ that are relevant to this paper include:
\begin{itemize}
  \item integral representations of Mellin-Barnes type (see \cite[Equation 10.32.13]{DLMF}):
\begin{equation}\label{eq:KMellin}
K_\nu(z)=\frac{(z/2)^\nu}{4 \pi i}\int_{c-i\infty}^{c+i\infty}\Gamma(t)\Gamma(t-\nu)\left(\frac{z}{2}\right)^{-2t}\ud t, \end{equation}
for $c>\max\{\Re \nu, 0\}$ and $|\arg z|<\pi$;
  \item recurrence relations (see \cite[Equation 10.29.1]{DLMF}):
\begin{align}
K_{\nu-1}(z)-K_{\nu+1}(z) & =-\frac{2\nu}{z}K_{\nu}(z), \label{eq:recurK1}\\
K_{\nu-1}(z)+K_{\nu+1}(z) & = -2K_{\nu}'(z) \label{eq:recurK2};
\end{align}
  \item asymptotic behavior (see \cite[Equations 10.30.2, 10.30.3 and 10.40.2]{DLMF}):
\begin{align}
K_{\nu}(z)&\sim\left\{
                 \begin{array}{ll}
                   2^{\nu-1}\Gamma(\nu)z^{-\nu}, & \hbox{$z\to 0$, $~\Re\nu>0$,} \\
                   -\ln z, & \hbox{$z\to 0$, $~\nu=0$,}
                 \end{array}
               \right.
\\
K_{\nu}(z)& = \sqrt{\frac{\pi}{2z}}e^{-z}\left(1+O\left(\frac{1}{z}\right)\right),\quad z\to \infty, \quad |\arg z|<\frac{3\pi}{2}. \label{eq:asyK2}
\end{align}
\end{itemize}

We then define scaled modified Bessel functions of the second kind $\rho_{\nu,b}$ $(\nu>0,b>0)$ as follows:
\begin{equation}\label{def:rho}
\rho_{\nu,b}(x)=x^{\frac{\nu}{2}}K_{\nu}(2b\sqrt{x}), \quad x>0.
\end{equation}
The function $\rho_{\nu,b}(x)$ is positive over the positive real axis. By \eqref{eq:KMellin}--\eqref{eq:asyK2}, it follows that
\begin{equation}\label{eq:rhoMellin}
\rho_{\nu,b}(x)=\frac{b^{-\nu}}{4 \pi i}\int_{c-i\infty}^{c+i\infty}\Gamma(t)\Gamma(t+\nu)\left(b^2 x\right)^{-t}\ud t, \quad c>0.
\end{equation}
satisfy the recurrence relations
\begin{align}
\rho_{\nu+1,b}(x) & = x \rho_{\nu-1,b}(x)+\frac{\nu}{b}\rho_{\nu,b}(x),  \label{eq:recurrho1}\\
\rho_{\nu+1,b}'(x) & = -b \rho_{\nu,b}(x), \label{eq:recurrho2}
\end{align}
and have the asymptotic behavior:
\begin{align}
\rho_{\nu,b}(x)&\sim \frac{\Gamma(\nu)}{2b^\nu},\quad x\to 0, \label{eq:asyrho1}
\\
\rho_{\nu,b}(x)&\sim \left(\frac{\pi}{4b}\right)^{\frac{1}{2}}x^{\frac{2\nu-1}{4}}e^{-2b\sqrt{x}},\quad x\to +\infty. \label{eq:asyrho2}
\end{align}

A result similar to Proposition \ref{prop:omegahigh} holds for $\rho_{\nu,b}$.
\begin{proposition}\label{prop:rhohigh}
For any integer $m\geq 0$, we have
\begin{equation}\label{eq:rhohighrecu}
\rho_{\nu+m,b}(x)=(-b)^{-m} r_{m,\nu}(b^2 x)\rho_{\nu,b}(x)+(-b)^{1-m}s_{m,\nu}(b^2 x)\rho_{\nu+1,b}(x),
\end{equation}
where $ r_{0,\nu}(x)=1$, $ s_{0,\nu}(x)=0$, and for $m\geq 1$, the functions $r_{m,\nu}$ and $s_{m,\nu}$ are given by \eqref{def:rmmu} and \eqref{def:smmu}, respectively.
\end{proposition}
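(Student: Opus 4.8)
The plan is to follow the template of the proof of Proposition \ref{prop:omegahigh}, except that here there is no off-the-shelf $b=1$ identity for the $K$-type functions to quote, so I would argue directly by induction on $m$, pushing everything through the common three-term recurrence satisfied by the coefficient polynomials $r_{m,\mu}$ and $s_{m,\mu}$ in the index $m$.

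First I would isolate that recurrence. Substituting the expansion \eqref{eq:recuromegam} into the shifted form of \eqref{eq:recuromega1}, namely $\omega_{\mu+m+1,a}(x)=x\,\omega_{\mu+m-1,a}(x)-\tfrac{\mu+m}{a}\,\omega_{\mu+m,a}(x)$, and equating the coefficients of the linearly independent functions $\omega_{\mu,a}$ and $\omega_{\mu+1,a}$, one clears the powers of $a$ (writing $t=a^2x$) to find that both families obey the \emph{same} recurrence
\[
f_{m+1,\mu}(t)=t\,f_{m-1,\mu}(t)-(\mu+m)\,f_{m,\mu}(t),
\]
subject to $r_{0,\mu}=1,\ r_{1,\mu}=0$ and $s_{0,\mu}=0,\ s_{1,\mu}=1$ as read off from \eqref{def:rmmu}--\eqref{def:smmu}. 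Alternatively, this recurrence descends directly from the Lommel polynomial recurrence $h_{m+1,\mu}(x)=2x(m+\mu)h_{m,\mu}(x)-h_{m-1,\mu}(x)$ recalled after Proposition \ref{prop:omegahigh}.

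Next I would run the induction for $\rho_{\nu,b}$. The cases $m=0$ and $m=1$ are immediate from $r_{0,\nu}=1,\ s_{0,\nu}=0$ and $r_{1,\nu}=0,\ s_{1,\nu}=1$, so \eqref{eq:rhohighrecu} holds trivially there. For the inductive step I would insert the assumed expansions for indices $m-1$ and $m$ into the shifted form of \eqref{eq:recurrho1}, namely $\rho_{\nu+m+1,b}(x)=x\,\rho_{\nu+m-1,b}(x)+\tfrac{\nu+m}{b}\,\rho_{\nu+m,b}(x)$, and collect the coefficients of $\rho_{\nu,b}$ and $\rho_{\nu+1,b}$. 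The only delicate point is the sign bookkeeping: multiplying the coefficient of $\rho_{\nu,b}$ through by $(-b)^{m+1}$ turns $\tfrac{\nu+m}{b}(-b)^{-m}$ into $-(\nu+m)$ and turns $x(-b)^{2}$ into $b^2x=t$, so the coefficient reproduces exactly $t\,r_{m-1,\nu}(t)-(\nu+m)r_{m,\nu}(t)=r_{m+1,\nu}(t)$, and identically for $s$. This is precisely the recurrence from the first step, so \eqref{eq:rhohighrecu} propagates to $m+1$.

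Conceptually the whole computation is explained by the observation that the formal substitution $a\mapsto -b$ sends the $\omega$-recurrences \eqref{eq:recuromega1}--\eqref{eq:recuromega2} to the $\rho$-recurrences \eqref{eq:recurrho1}--\eqref{eq:recurrho2}; since $(-b)^2=b^2$ the polynomial arguments $a^2x$ and $b^2x$ coincide, and Proposition \ref{prop:omegahigh} transforms termwise into \eqref{eq:rhohighrecu}. The main obstacle, such as it is, lies entirely in tracking the signs of the powers of $(-b)$ through the inductive step; no analytic subtleties intervene, since \eqref{eq:rhohighrecu} is an algebraic identity forced by the two recurrences together with the base cases.
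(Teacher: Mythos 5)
Your proof is correct, but it takes a genuinely different route from the paper's. The paper first reduces to $b=1$ via the scaling $\rho_{\nu,1}(b^2x)=b^{\nu}\rho_{\nu,b}(x)$, rewrites \eqref{eq:rhohighrecu} in terms of $K_{\nu+m}$, passes to the Hankel function through $K_\nu(y)=\tfrac{\pi i}{2}e^{\nu\pi i/2}H_{\nu}^{(1)}(iy)$, and then quotes the Lommel expansion \eqref{eq:recuJ}, which applies to $H_{\nu}^{(1)}$ because it satisfies the same three-term recurrence as $J_\nu$; comparing the resulting identity \eqref{eq:recuH} with the target finishes the argument. You instead bypass Hankel functions entirely: you extract the common three-term recurrence $f_{m+1,\mu}(t)=t\,f_{m-1,\mu}(t)-(\mu+m)f_{m,\mu}(t)$ satisfied by $r_{m,\mu}$ and $s_{m,\mu}$ (with $r_{0,\mu}=1$, $r_{1,\mu}=0$, $s_{0,\mu}=0$, $s_{1,\mu}=1$), and propagate \eqref{eq:rhohighrecu} by induction through the shifted recurrence \eqref{eq:recurrho1}; your sign bookkeeping with powers of $(-b)$ is exactly right, and the base cases check out against \eqref{def:rmmu}--\eqref{def:smmu}. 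Both proofs rest on the same structural fact---Lommel polynomials are the connection coefficients generated by iterating a Bessel-type three-term recurrence---but the paper imports this via classical identities, while you re-derive it, which keeps everything real-valued and elementary; your closing remark that the substitution $a\mapsto -b$ carries \eqref{eq:recuromega1}--\eqref{eq:recuromega2} into \eqref{eq:recurrho1}--\eqref{eq:recurrho2} is the cleanest conceptual explanation of why the same $r_{m,\nu},s_{m,\nu}$ reappear. One small caveat: your first derivation of the coefficient recurrence, by substituting \eqref{eq:recuromegam} into the shifted \eqref{eq:recuromega1} and ``equating coefficients,'' tacitly uses that no nontrivial polynomial combination of $\omega_{\mu,a}$ and $\omega_{\mu+1,a}$ vanishes identically---true (their ratio \eqref{eq:rationomega1} is a nonrational Markov function, and the AT property appears in Section \ref{sec:normality}), but not yet available at this point of the paper without comment; your alternative derivation directly from the Lommel recurrence $h_{m+1,\mu}(x)=2x(m+\mu)h_{m,\mu}(x)-h_{m-1,\mu}(x)$ avoids this issue entirely, so the proof stands as written.
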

\begin{proof}
Since $\rho_{\nu,1}(b^2x)=b^{\nu}\rho_{\nu,b}(x)$, it is sufficient to prove \eqref{eq:rhohighrecu} for the case that $b=1$, which is similar to the proof of \eqref{eq:reca=1}.

If $b=1$, by \eqref{def:rho} and setting $y=2\sqrt{x}$, we could rewrite \eqref{eq:rhohighrecu} as
$$
K_{\nu+m}(y)=\left(-\frac{2}{y}\right)^{m} r_{m,\nu}\left(\frac{y^2}{4}\right)K_{\nu}(y)+
\left(-\frac{2}{y}\right)^{m-1}s_{m,\nu}\left(\frac{y^2}{4}\right)K_{\nu+1}(x).
$$
From \cite[Equation 10.27.8]{DLMF}, we note that
$$K_\nu(y)=\frac{\pi i}{2}e^{\nu \pi i/2}H_{\nu}^{(1)}(iy), \qquad x>0,$$
where $H_{\nu}^{(1)}(y)$ is the Bessel function of the third kind (see \cite[Section 10.2]{DLMF}). Hence, we further have
\begin{equation}\label{eq:recuH1}
H_{\nu+m}^{(1)}(iy)=\left(\frac{2i}{y}\right)^{m} r_{m,\nu}\left(\frac{y^2}{4}\right)H_{\nu}^{(1)}(iy)+
\left(\frac{2i}{y}\right)^{m-1}s_{m,\nu}\left(\frac{y^2}{4}\right)H_{\nu+1}^{(1)}(iy).
\end{equation}
Since $H_{\nu}^{(1)}$ satisfies the same three-term recurrence relation as $J_{\nu}$ (see \cite[Equation 10.6.1]{DLMF}),
it is then readily seen from \eqref{eq:recuJ} that \begin{equation}\label{eq:recuH}
H_{\nu+m}^{(1)}(iy)=-h_{m-2,\nu+2}(-i/y)H_{\nu}^{(1)}(iy)+h_{m-1,\nu+1}(-i/y)H_{\nu+1}^{(1)}(iy).
\end{equation}
Comparing \eqref{eq:recuH1} with \eqref{eq:recuH} gives us \eqref{eq:rhohighrecu} with $b=1$.

This completes the proof of Proposition \ref{prop:rhohigh}.
\end{proof}


\section{Mixed type multiple orthogonal polynomials associated with the modified Bessel functions}
\label{sec:MMOP}
With $\omega_{\mu,a}$ and $\rho_{\nu,b}$ defined in \eqref{def:omega} and \eqref{def:rho}, we set two vectors of functions
\begin{equation}\label{def:MBweights}
\textbf{w}_1=(\omega_{\mu,a},\omega_{\mu+1,a}), \qquad \textbf{w}_2=(\rho_{\nu,b},\rho_{\nu+1,b}).
\end{equation}
Given a pair of multi-indices $(\textbf{n}_1,\textbf{n}_2)\in\mathbb{Z}_+^2 \times \mathbb{Z}_+^2$ satisfying \eqref{eq:multindexcond}, we will study mixed type multiple orthogonal polynomials with respect to $(\textbf{n}_1,\textbf{n}_2)$ and the system of weights $(\textbf{w}_1,\textbf{w}_2)$ over $(0,\infty)$ under the conditions that
\begin{equation}\label{eq:paracond}
\mu>-1,\qquad \nu>0, \qquad b>a>0.
\end{equation}

By Definition \ref{def:MMOP}, we look for a vector of polynomials $(A_{\textbf{n}_1,\textbf{n}_2,1},  A_{\textbf{n}_1,\textbf{n}_2,2})$ such that
$$\deg A_{\textbf{n}_1,\textbf{n}_2,j}\leq n_{1,j}-1,~~~~j=1,2,$$
and satisfies the orthogonality conditions
\begin{multline}\label{eq:orthoQ1}
\int_0^\infty (A_{\textbf{n}_1,\textbf{n}_2,1}(x)\omega_{\mu,a}(x)+A_{\textbf{n}_1,\textbf{n}_2,2}(x)\omega_{\mu+1,a}(x))x^j\rho_{\nu,b}(x)\ud x \\
=\int_0^\infty Q_{\textbf{n}_1,\textbf{n}_2}(x)x^j\rho_{\nu,b}(x)\ud x
=0, ~~\textrm{$j=0,1,\ldots,n_{2,1}-1$}
\end{multline}
and
\begin{multline}\label{eq:orthoQ2}
\int_0^\infty (A_{\textbf{n}_1,\textbf{n}_2,1}(x)\omega_{\mu,a}(x)+A_{\textbf{n}_1,\textbf{n}_2,2}(x)\omega_{\mu+1,a}(x))x^j\rho_{\nu+1,b}(x)\ud x \\
=\int_0^\infty Q_{\textbf{n}_1,\textbf{n}_2}(x)x^j\rho_{\nu+1,b}(x)\ud x
=0, ~~\textrm{$j=0,1,\ldots,n_{2,2}-1$.}
\end{multline}
The vector of dual polynomials $(B_{\textbf{n}_1,\textbf{n}_2,1}, B_{\textbf{n}_1,\textbf{n}_2,2})$ and their linear form $P_{\textbf{n}_1,\textbf{n}_2}$ are then defined similarly according to Definition \ref{def:dualMOP} and \eqref{def:Pn1}. Note that the assumptions \eqref{eq:paracond} ensure that all the relevant integrals are well-defined; see the asymptotic behavior of $\omega_{\mu,a}$ and $\rho_{\nu,b}$ given in \eqref{eq:asyomega1}--\eqref{eq:asyomega2} and \eqref{eq:asyrho1}--\eqref{eq:asyrho2}, respectively.

In what follows, we first deal with uniqueness of the polynomials and show that they are indeed unique up to a multiplicative factor. Then, we will focus on the special case that each multi-index is on or close to the diagonal. For that purpose, the following notations will be used throughout this paper. For each $ m \in \mathbb{Z}_+$, we set
\begin{equation}
\textbf{m}=\left(\lfloor \frac{m}{2} \rfloor+1, \lfloor \frac{m-1}{2} \rfloor+1\right)
=\left\{
 \begin{array}{ll}
\left( \frac{m}{2}+1,                                                                                         \frac{m}{2}\right), & \hbox{ $m$ even,} \\
\left( \frac{m+1}{2},                                                                                         \frac{m+1}{2}\right), & \hbox{ $m$ odd,}                                                                                        \end{array}
\right.
\end{equation}
Hence, $|\textbf{m}|=m+1$. We then define, for each $n\in\mathbb{N}$,
\begin{align}
&A_{n,i}(x)=A_{\textbf{n},\textbf{n-1},i}(x)=\left\{
                                               \begin{array}{ll}
                                                 A_{\left( \frac{n+1}{2},                                                                                         \frac{n+1}{2}\right),\left( \frac{n+1}{2},                                                                                         \frac{n-1}{2}\right),i}(x), & \hbox{$n$ odd,} \\
                                                 A_{\left( \frac{n}{2}+1,                                                                                         \frac{n}{2}\right),\left( \frac{n}{2},                                                                                         \frac{n}{2}\right),i}(x), & \hbox{$n$ even,}
                                               \end{array}
                                             \right.
~~ i=1,2, \label{def:An}\\
& Q_n(x) \nonumber \\
&=A_{n,1}(x)\omega_{\mu,a}(x)+A_{n,2}(x)\omega_{\mu+1,a}(x) \label{def:QnNotation} \\
&=\left\{
                                               \begin{array}{ll}
                                                 A_{\left( \frac{n+1}{2},                                                                                         \frac{n+1}{2}\right),\left( \frac{n+1}{2},                                                                                         \frac{n-1}{2}\right),1}(x)\omega_{\mu,a}(x)+A_{\left( \frac{n+1}{2},                                                                                         \frac{n+1}{2}\right),\left( \frac{n+1}{2},                                                                                         \frac{n-1}{2}\right),2}(x)\omega_{\mu+1,a}(x), & \hbox{ $n$ odd,}  \\
                                                 A_{\left( \frac{n}{2}+1,                                                                                         \frac{n}{2}\right),\left( \frac{n}{2},                                                                                         \frac{n}{2}\right),1}(x)\omega_{\mu,a}(x)+A_{\left( \frac{n}{2}+1,                                                                                         \frac{n}{2}\right),\left( \frac{n}{2},                                                                                         \frac{n}{2}\right),2}(x)\omega_{\mu+1,a}(x), & \hbox{ $n$ even,}
                                               \end{array}
                                             \right.  \nonumber
\end{align}
and similarly,
\begin{align}
B_{n,i}(x)&=B_{\textbf{n},\textbf{n-1},i}(x), ~~ i=1,2, \label{def:Bn}\\
P_n(x)&=B_{n,1}(x)\rho_{\nu,b}(x)+B_{n,2}(x)\rho_{\nu+1,b}(x) \label{def:PnNotation}.
\end{align}
To emphasize the dependence of parameters $\mu,\nu,a,b$, we will occasionally write
\begin{equation}\label{eq:notations}
\begin{aligned}
A_{n,i}(x)&=A_{n,i}^{\mu,\nu,a,b}(x),\qquad Q_n(x)=Q_n^{\mu,\nu,a,b}(x),
\\
B_{n,i}(x)&=B_{n,i}^{\mu,\nu,a,b}(x), \qquad P_n(x)=P_n^{\mu,\nu,a,b}(x).
\end{aligned}
\end{equation}

\section{Normality}\label{sec:normality}

The main result of this section is the following theorem:
\begin{theorem}\label{thm:nomality}
Let $\textbf{w}_1$ and $\textbf{w}_2$ be two vectors of scaled modified Bessel functions given in \eqref{def:MBweights}. We have that any pair of multi-indices $(\textbf{n}_1,\textbf{n}_2)\in\mathbb{Z}_+^2 \times \mathbb{Z}_+^2$ is normal, and the vector of mixed type multiple orthogonal polynomials $(A_{\textbf{n}_1,\textbf{n}_2,1},  A_{\textbf{n}_1,\textbf{n}_2,2})$ is uniquely determined except for a constant factor. Furthermore, the function $Q_{\textbf{n}_1,\textbf{n}_2}$ has $|\textbf{n}_2|$ sign changes on $(0,\infty)$.
\end{theorem}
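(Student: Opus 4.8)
The plan is to reduce the entire statement to a single structural fact: that each of the two vectors $\textbf{w}_1$ and $\textbf{w}_2$ forms an algebraic Chebyshev (AT) system on $(0,\infty)$ for \emph{every} multi-index. Precisely, I would record the AT property in the form that, for any $(m_1,m_2)\in\mathbb{Z}_+^2$, a nontrivial combination $A_1\omega_{\mu,a}+A_2\omega_{\mu+1,a}$ with $\deg A_i\le m_i-1$ has at most $m_1+m_2-1$ zeros on $(0,\infty)$, and likewise for $(\rho_{\nu,b},\rho_{\nu+1,b})$. Granting this, normality, uniqueness up to a constant, and the exact sign-change count all follow from the classical zero-counting scheme for mixed type multiple orthogonal polynomials, so the substantive work is concentrated in verifying the AT property.

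Establishing the AT property is the main obstacle. Since the change of variable $x\mapsto a^2x$ together with $\omega_{\mu,1}(a^2x)=a^{\mu}\omega_{\mu,a}(x)$ preserves both the form of the linear combination and its zero count on $(0,\infty)$, I may normalize $a=1$ (and $b=1$) and work with two consecutive Bessel orders. The AT property for such consecutive-order systems is available in the special cases of \cite{CouVan03} (first kind) and \cite{VAY00} (second kind); I would extend it to the present parameters by invoking the criterion of \cite{CouVan03}, whose inputs are exactly the three-term recurrences \eqref{eq:recuromega1}--\eqref{eq:recuromega2}, \eqref{eq:recurrho1}--\eqref{eq:recurrho2} and the differential relations $\omega_{\mu+1,a}'=a\omega_{\mu,a}$, $\rho_{\nu+1,b}'=-b\rho_{\nu,b}$. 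These relations in fact display each vector as a Nikishin system, from which perfectness follows. The delicate point, and the reason a dedicated argument is needed, is that a single Rolle step does not stay inside the two-dimensional span: the ratio $\omega_{\mu+1,a}/\omega_{\mu,a}$ satisfies a Riccati rather than a linear relation, so the count must be propagated through the recurrence-based induction of \cite{CouVan03} and not by naive differentiation.

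With the AT property in hand I would finish quickly. As $Q_{\textbf{n}_1,\textbf{n}_2}$ is a nontrivial element of the span of the $|\textbf{n}_1|$ functions attached to $\textbf{w}_1$, the AT property of $\textbf{w}_1$ bounds its number of sign changes on $(0,\infty)$ by $|\textbf{n}_1|-1=|\textbf{n}_2|$, where I have used \eqref{eq:multindexcond}; moreover linear independence of this system shows that $Q_{\textbf{n}_1,\textbf{n}_2}\equiv 0$ would force all $A_{\textbf{n}_1,\textbf{n}_2,j}\equiv 0$, so a genuine solution never produces the zero form. For the matching lower bound, suppose $Q_{\textbf{n}_1,\textbf{n}_2}$ changed sign at only $r\le|\textbf{n}_2|-1$ points $\xi_1<\cdots<\xi_r$. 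Using the AT (hence interpolation) property of $\textbf{w}_2$, I would construct a nontrivial $R$ in the span of $\{x^j\rho_{\nu,b}\}_{j<n_{2,1}}\cup\{x^j\rho_{\nu+1,b}\}_{j<n_{2,2}}$ changing sign exactly at those points, so that $Q_{\textbf{n}_1,\textbf{n}_2}R$ keeps one sign and is not identically zero; then $\int_0^\infty Q_{\textbf{n}_1,\textbf{n}_2}R\,\mathrm{d}x\neq 0$, contradicting the orthogonality \eqref{eq:orthocond}. Hence the number of sign changes is exactly $|\textbf{n}_2|$, which is the final assertion.

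Normality and uniqueness emerge from the same dichotomy. If some solution had a coefficient $A_{\textbf{n}_1,\textbf{n}_2,j}$ of degree strictly below $n_{1,j}-1$, then $Q_{\textbf{n}_1,\textbf{n}_2}$ would lie in the span associated with a reduced multi-index $\tilde{\textbf{n}}_1$ with $|\tilde{\textbf{n}}_1|\le|\textbf{n}_2|$; by the AT property it would then have at most $|\textbf{n}_2|-1$ sign changes if nontrivial, contradicting the lower bound just obtained, whereas if it were trivial the linear independence would force every coefficient to vanish, against condition $(\mathrm{i})$ of Definition \ref{def:MMOP}. Thus each solution attains $\deg A_{\textbf{n}_1,\textbf{n}_2,j}=n_{1,j}-1$, so $(\textbf{n}_1,\textbf{n}_2)$ is normal; uniqueness up to a constant then follows as already noted for normal indices, by cancelling a leading coefficient between two solutions and reapplying normality. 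Running the identical argument for the pair $(\textbf{w}_2,\textbf{w}_1)$ covers the dual polynomials.
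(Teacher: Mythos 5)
Your overall architecture is the same as the paper's: first establish that $(\omega_{\mu,a},\omega_{\mu+1,a})$ and $(\rho_{\nu,b},\rho_{\nu+1,b})$ are AT systems on $(0,\infty)$, then run the zero-counting scheme (upper bound $|\textbf{n}_1|-1=|\textbf{n}_2|$ on sign changes from the AT property of $\textbf{w}_1$; lower bound by building an interpolating linear form in the functions attached to $\textbf{w}_2$ that changes sign exactly at the sign changes of $Q_{\textbf{n}_1,\textbf{n}_2}$ and contradicting \eqref{eq:orthocond}; normality by passing to a reduced multi-index $\textbf{n}_1-\textbf{e}_i$). This second stage matches the paper's proof of Theorem \ref{thm:nomality} essentially step for step. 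One small imprecision: to get an interpolant changing sign \emph{exactly} at the $r$ prescribed points you cannot work in the full $|\textbf{n}_2|$-dimensional span as you state; you must restrict to a sub-multi-index of total size $r+1$ (the paper's $\textbf{n}_3+\textbf{e}_1$) so that the AT zero bound excludes extra sign changes. This is a fixable presentational point, not a wrong idea.

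The place where your write-up goes astray is the AT property itself. Your primary route --- the substitution $x\mapsto a^2x$ with $\omega_{\mu,1}(a^2x)=a^{\mu}\omega_{\mu,a}(x)$, which preserves degrees and zero counts and reduces to the known cases $a=1$, $b=1$ of \cite{CouVan03,VAY00} --- is legitimate, and is precisely the ``easy extension'' the paper alludes to before opting to include a self-contained proof. But your description of the criterion of \cite{CouVan03} (the paper's Theorem \ref{thm:CouVan}) is incorrect: its hypotheses are \emph{not} the recurrences \eqref{eq:recuromega1}, \eqref{eq:recurrho1} and the differential relations \eqref{eq:recuromega2}, \eqref{eq:recurrho2}, and those relations do not ``display each vector as a Nikishin system.'' The hypotheses are the Markov-function representations \eqref{eq:Nikishin}--\eqref{eq:Nikishin2} of the weight ratios, and in the paper these are supplied by two nontrivial special-function inputs: the Ismail--Kelker Mittag-Leffler expansion of $I_{\mu+1}/I_{\mu}$ (yielding \eqref{eq:rationomega1}) and the Grosswald/Ismail Stieltjes representation of $K_{\nu}/K_{\nu+1}$ (yielding \eqref{eq:ratiorho1}); the recurrences only convert each of these into the representation of the reciprocal ratio, as in \eqref{eq:rationomega2} and \eqref{eq:ratiorho2}. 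Recurrences and differentiation formulas are algebraic identities satisfied by any solution of the Bessel recursion and can never, by themselves, produce a global zero bound. So had you actually relied on your stated ``inputs'' rather than on scaling plus citation, the key step would fail; as written, your proof stands only because the citation route already covers everything, which renders the criterion paragraph redundant.
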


The essential issue in the proof of Theorem \ref{thm:nomality} is that both $\textbf{w}_1$ and $\textbf{w}_2$ form the so-called algebraic Chebyshev (AT) systems (cf. \cite[Section 23.1.2]{Ismail}, \cite{NikSor}), which we recall now.

\begin{definition}\label{def:AT}
A system of $m$ real-valued continuous functions $(w_1,\ldots,w_m)$ defined on the interval $\Delta\subset \mathbb{R}$  forms an AT system for the multi-index $\textbf{n}=(n_1,\ldots,n_m)\in\mathbb{Z}_+^m$ if for any choice of polynomials
$p_{1}(x),\ldots,p_m(x)$ with $\deg p_i\leq n_j-1$, the function
\[  \sum_{i=1}^m p_i(x)w_i(x) \]
has at most $|\textbf{n}|-1$ zeros on $\Delta$. If this is true for all $\textbf{n}\in\mathbb{Z}_+^m$, we have an AT system on $\Delta$.
\end{definition}

If a system of weight functions forms an AT system, the associated type I and type II multiple orthogonal polynomials exist uniquely, up to a multiplicative constant; cf. \cite[Section 23.1.2]{Ismail}.

For a system of functions consisted of two members $(w_1,w_2)$, the following theorem gives a sufficient condition that
$(w_1,w_2)$ forms an AT system, which is adapted from \cite[Theorems 2 and 3]{CouVan03}.
\begin{theorem}\label{thm:CouVan}
Let $w_1$ and $w_2$ be two functions defined on the real interval $\Delta_1$ with $w_1(x)>0$ and $w_2(x)>0$ for $x\in\Delta_1$. Suppose that there exist two measures $\sigma_1$ and $\sigma_2$ on $\Delta_2$, a real interval with $\overset{\circ}{\Delta}_1 \cap \overset{\circ}{\Delta}_2=\emptyset$, such that
\begin{align}
\frac{w_2(x)}{w_1(x)}&=\int_{\Delta_2}\frac{\ud \sigma_1(t)}{x-t}, \label{eq:Nikishin} \\
\frac{w_1(x)}{w_2(x)}&=x\int_{\Delta_2}\frac{\ud \sigma_2(t)}{x-t} \label{eq:Nikishin2}.
\end{align}
Then, $(w_1,w_2)$ forms an AT system on $\Delta_1$, provided both $\supp(\sigma_1)$ and $\supp(\sigma_2)$ contain an infinite number of points.
\end{theorem}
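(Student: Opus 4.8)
Since $w_1(x)>0$ and $w_2(x)>0$ on $\Delta_1$, for any polynomials $p_1,p_2$ (not both zero) with $\deg p_1\le n_1-1$ and $\deg p_2\le n_2-1$ the zeros on $\Delta_1$ of the linear form $p_1 w_1+p_2 w_2$ are exactly the zeros of $p_1+p_2\,(w_2/w_1)$, and also exactly the zeros of $p_2+p_1\,(w_1/w_2)$. The plan is to bound the number of these zeros using representation \eqref{eq:Nikishin} for the first form and representation \eqref{eq:Nikishin2} for the second, and to observe that the two bounds together exhaust all degree configurations, giving at most $|\textbf{n}|-1$ zeros as required by Definition \ref{def:AT}. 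Throughout I write $\widehat{\sigma}_i(z)=\int_{\Delta_2}\frac{\ud\sigma_i(t)}{z-t}$, a Markov function analytic in $\C\setminus\Delta_2$ with $\widehat{\sigma}_i(z)=O(1/z)$ as $z\to\infty$.

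The core is a counting lemma that I would establish first: for a pair of polynomials $(P,Q)$ with $Q\not\equiv 0$ and a positive measure $\sigma$ with infinite support on $\Delta_2$, the function $H=P+Q\,\widehat{\sigma}$ has at most $d+s+1$ zeros in $\C\setminus\Delta_2$, where $d=\max(\deg P,\deg Q-1)$ is the order of the pole of $H$ at infinity and $s$ is the number of sign changes of $Q$ on $\Delta_2$. To prove it, suppose $H$ has $N$ zeros (with multiplicity) off $\Delta_2$, let $W$ be the degree-$N$ polynomial with exactly these zeros, so that $H/W$ is analytic in $\C\setminus\Delta_2$, and note $W\neq 0$ on $\Delta_2$. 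For $0\le k\le N-d-2$ the integrand $z^k H(z)/W(z)$ is $O(z^{-2})$ at infinity, so deforming $\oint_{|z|=R}$ onto a contour around $\Delta_2$ and applying Fubini to the Cauchy kernel yields the orthogonality relations
\begin{equation*}
\int_{\Delta_2}\frac{Q(t)\,t^k}{W(t)}\,\ud\sigma(t)=0,\qquad k=0,1,\ldots,N-d-2 .
\end{equation*}
Since $W$ has no sign change on $\Delta_2$, the weight $Q/W$ changes sign exactly $s$ times there; if $N\ge d+s+2$ I would pick a polynomial $\pi$ of degree $s$ vanishing at those sign-change points, making $\pi Q/W$ of constant sign, whence the relation above gives $\int_{\Delta_2}\pi(t)Q(t)/W(t)\,\ud\sigma(t)=0$ with an integrand of one sign that is nonzero on the infinite support of $\sigma$ — a contradiction. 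Hence $N\le d+s+1$.

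Applying the lemma, with \eqref{eq:Nikishin} I take $P=p_1$, $Q=p_2$, $\sigma=\sigma_1$, so $d=\max(\deg p_1,\deg p_2-1)$ and $s\le\deg p_2$; this gives $N\le \deg p_1+\deg p_2+1\le|\textbf{n}|-1$ whenever $\deg p_1\ge\deg p_2-1$. With \eqref{eq:Nikishin2} I instead take $P=p_2$, $Q(x)=x\,p_1(x)$, $\sigma=\sigma_2$; here the decisive point is that the extra factor $x$ contributes no additional sign change on $\Delta_2$, because $\overset{\circ}{\Delta}_1\cap\overset{\circ}{\Delta}_2=\emptyset$ forces $0\notin\overset{\circ}{\Delta}_2$, so $s\le\deg p_1$, while $d=\max(\deg p_2,\deg p_1)$; this gives $N\le\deg p_1+\deg p_2+1\le|\textbf{n}|-1$ whenever $\deg p_2\ge\deg p_1$. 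Because at least one of the inequalities $\deg p_1\ge\deg p_2-1$ and $\deg p_2\ge\deg p_1$ always holds, the two representations together bound the number of zeros of $p_1 w_1+p_2 w_2$ on $\Delta_1$ by $|\textbf{n}|-1$ in every case, the degenerate cases $p_1\equiv 0$ or $p_2\equiv 0$ being immediate. Since this holds for all $\textbf{n}$, the pair $(w_1,w_2)$ is an AT system on $\Delta_1$.

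I expect the counting lemma to be the main obstacle, in two respects: the passage from the contour integral to the orthogonality relations must be justified for a general, possibly singular, positive measure, and the concluding sign-change argument relies essentially on $\supp(\sigma_i)$ being infinite, which is exactly where that hypothesis enters. A secondary technical point is a possible zero of the linear form at the common endpoint of $\overline{\Delta}_1$ and $\overline{\Delta}_2$, where $W$ could meet $\Delta_2$; this is handled by noting $\sigma_i$ carries no mass at that point or by a limiting perturbation of $W$. Finally, the conceptual heart is recognizing that both \eqref{eq:Nikishin} and \eqref{eq:Nikishin2} are genuinely required — each controls one of the two complementary degree regimes, and the factor $x$ in \eqref{eq:Nikishin2} is precisely what prevents a spurious sign change so that the two regimes overlap and cover all multi-indices.
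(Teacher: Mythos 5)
Your overall architecture is sound, and it is a genuinely different route from the one behind this theorem: the paper itself offers no proof of Theorem \ref{thm:CouVan} but quotes it from \cite{CouVan03}, where the argument runs through determinantal identities for structured (Cauchy-type) matrices, the paper recording only the regime split that you rediscover (condition \eqref{eq:Nikishin} covers $n_2\le n_1+1$, condition \eqref{eq:Nikishin2} covers $n_1\le n_2$). Your counting lemma --- quasi-orthogonality relations $\int_{\Delta_2}t^kQ(t)/W(t)\,\ud\sigma(t)=0$ obtained by contour deformation, then a contradiction against the sign changes of $Q/W$ on the infinite support --- is the standard ``Markov function'' technique from the Nikishin-system literature, and as a lemma it is correct.

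There is, however, a genuine gap at the step you yourself call decisive. The claim that $\overset{\circ}{\Delta}_1\cap\overset{\circ}{\Delta}_2=\emptyset$ ``forces $0\notin\overset{\circ}{\Delta}_2$'' is a non sequitur: the hypotheses place no a priori constraint on where $0$ sits (e.g.\ $\Delta_1=[1,2]$, $\Delta_2=[-1,1/2]$ has disjoint interiors and is compatible with all the positivity requirements). What your Case-B count really needs is that the factor $x$ does not change sign on $\supp(\sigma_2)$; if $\sigma_2$ had mass on both sides of $0$, then $Q(x)=xp_1(x)$ could have $\deg p_1+1$ sign changes there, the bound would degrade to $\deg p_1+\deg p_2+2\le|\textbf{n}|$, and the regime $\deg p_2\ge\deg p_1$ would no longer be covered. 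This can be repaired, but it requires an argument you have not supplied: the two representations together force $\supp(\sigma_2)\subset(-\infty,0]$. Indeed, on $\Delta_1$ one has $\widehat{\sigma}_1(x)\cdot x\widehat{\sigma}_2(x)=1$, hence by analytic continuation $z\widehat{\sigma}_2(z)=1/\widehat{\sigma}_1(z)$ off $\overline{\Delta}_2$; since $\widehat{\sigma}_1$ maps the upper half-plane into the lower one, $z\widehat{\sigma}_2(z)$ is a Herglotz function, and from
\begin{equation*}
\Im\bigl(z\widehat{\sigma}_2(z)\bigr)=-\Im z\int_{\Delta_2}\frac{t\,\ud\sigma_2(t)}{|z-t|^2}
\end{equation*}
one sees that this integral must be negative throughout the upper half-plane, which rules out any mass of $\sigma_2$ on $(0,\infty)$ (let $z$ approach a hypothetical point of $\supp(\sigma_2)\cap(0,\infty)$). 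With this --- or with the explicit hypothesis $\Delta_2\subset(-\infty,0]$, which is all the paper needs, cf.\ \eqref{eq:rationomega2} and \eqref{eq:ratiorho2} --- your proof goes through, with $s$ counted on the convex hull of $\supp(\sigma_2)$ in the lemma.

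Two smaller inaccuracies, both visible in the paper's own application. First, the measures need not be finite, so $\widehat{\sigma}_i(z)=O(1/z)$ can fail: the measure $\sigma_1$ in \eqref{eq:rationomega1} has mass $1/a$ at each point $-j_{\mu,n}^2/(4a^2)$ (since $J_\mu'(j_{\mu,n})=-J_{\mu+1}(j_{\mu,n})$), hence infinite total mass and only $O(z^{-1/2})$ decay; your lemma survives because $o(1)$ decay combined with the Cauchy kernel still gives integrable decay on the large circle, but the statement should be corrected, and the Fubini step must be justified for unbounded $\Delta_2$. Second, the proposed endpoint fix ``$\sigma_i$ carries no mass at that point'' is false here --- the measures $\sigma_2$ and $\sigma_4$ of the paper have atoms at $0$; this is harmless only because $\Delta_1=(0,\infty)$ is open, so the finitely many zeros you encircle stay at positive distance from $\Delta_2$.
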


A system of functions $(w_1,w_2)$ defined on $\Delta_1$ such that the ratio $w_2/w_1$ can be written as a Markov function for a measure $\sigma_1$ on $\Delta_2$ \eqref{eq:Nikishin} with $\overset{\circ}{\Delta}_1 \cap \overset{\circ}{\Delta}_2=\emptyset$ is called a Nikishin system (with 2 functions), due to the first description by Nikishin \cite{Niki82}. The proof of Theorem \ref{thm:CouVan} in \cite{CouVan03} is direct and relies on determinantal identities for structured matrices. Indeed, the condition \eqref{eq:Nikishin} ensures that $(w_1,w_2)$ forms an AT system for every $(n,m)\in\mathbb{N}^2$ with $m\leq n+1$, while $\eqref{eq:Nikishin2}$ implies that $(w_1,w_2)$ forms an AT system for every $(n,m)\in\mathbb{N}^2$ with $ n \leq m$, which together yields the assertion. The perfectness of a Nikishin system of two functions was also proved in \cite{BLOP92,DK} based on Hermite-Pad\'{e} approximation.

We are now ready to prove Theorem \ref{thm:nomality}.

\paragraph{Proof of Theorem \ref{thm:nomality}}
We show that both $(\omega_{\mu,a},\omega_{\mu+1,a})$ and $(\rho_{\nu,b},\rho_{\nu+1,b})$ form AT systems, which are already known for the special cases $a=1$ and $b=1$ \cite{CouVan03,VAY00}. We follow the idea in \cite{CouVan03}.

For the pair $(\omega_{\mu,a},\omega_{\mu+1,a})$, let $\{j_{\nu,n}\}_{n=1}^\infty$ be the $n$-th positive zero of the Bessel function of the first kind $J_\nu$. It is shown in \cite[Theorem 4.7]{IsmailKel79} that
\begin{equation*}\label{eq:IsmKel}
z^{(\nu-\mu)/2}\frac{I_{\mu}(\sqrt{z})}{I_{\nu}(\sqrt{z})}=-2\sum_{n=1}^{\infty}\frac{j_{\nu,n}^{\nu+1-\mu}J_\mu(j_{\nu,n})}
{(z+j_{\nu,n}^2)J'_\nu(j_{\nu,n})},\quad \mu>\nu>-1,\quad |\arg z|<\pi.
\end{equation*}
This, together with the definition of $\omega_{\mu,a}$ given in \eqref{def:omega}, implies that
\begin{multline}\label{eq:rationomega1}
\frac{\omega_{\mu+1,a}(x)}{\omega_{\mu,a}(x)}=\sqrt{x}\frac{I_{\mu+1}(2a\sqrt{x})}{I_{\mu}(2a\sqrt{x})}
\\
=-x\sum_{n=1}^{\infty}\frac{J_{\mu+1}(j_{\mu,n})}
{(ax+j_{\mu,n}^2/(4a))J'_\mu(j_{\mu,n})}
=x\int_{-\infty}^{0}\frac{\ud \sigma_1(t)}{x-t},
\end{multline}
where
$$\sigma_1=-\sum_{n=1}^{\infty}\frac{J_{\mu+1}(j_{\mu,n})}
{aJ'_\mu(j_{\mu,n})}\delta_{-\frac{j_{\mu,n}^2}{4a^2}}$$
is a discrete measure on $(-\infty,0]$ built from a linear combination of Dirac measures on the points $-j_{\mu,n}^2/4a^2$. On the other hand, it is readily seen from \eqref{eq:recuromega1} and \eqref{eq:rationomega1} that
\begin{multline}\label{eq:rationomega2}
\frac{\omega_{\mu,a}(x)}{\omega_{\mu+1,a}(x)}=\frac{\mu+1}{ax}+\frac{1}{x}\frac{\omega_{\mu+2,a}(x)}{\omega_{\mu+1,a}(x)}
\\=\frac{\mu+1}{a}\frac{1}{x}-\sum_{n=1}^{\infty}\frac{J_{\mu+2}(j_{\mu+1,n})}
{(ax+j_{\mu+1,n}^2/(4a))J'_{\mu+1}(j_{\mu+1,n})}
= \int_{-\infty}^{0}\frac{\ud \sigma_2(t)}{x-t},
\end{multline}
where
$$\sigma_2=\frac{\mu+1}{a}\delta_0-\sum_{n=1}^{\infty}\frac{J_{\mu+2}(j_{\mu+1,n})}
{aJ'_{\mu+1}(j_{\mu+1,n})}\delta_{-\frac{j_{\mu+1,n}^2}{4a^2}}.$$

For the pair $(\rho_{\nu,a},\rho_{\nu+1,a})$, we make use of the following formula (see \cite{Grosswald} and \cite{Ismail77}):
$$ z^{-1/2}\frac{K_{\nu-1}(\sqrt{z})}{K_\nu(\sqrt{z})}=\frac{2}{\pi^2}\int_0^\infty \frac{1}{(z+t)(J_\nu^2(\sqrt{t})+Y_{\nu}^2(\sqrt{t}))t}\ud t,\quad \nu>0,\quad |\arg z|<\pi,
$$
where
$$Y_{\nu}(z)=\left\{
               \begin{array}{ll}
                 \frac{J_\nu(z)\cos(\nu \pi)-J_{-\nu}(z)}{\sin(\nu\pi)}, & \hbox{$\nu \neq 0,\pm1,\pm2,\ldots$, } \\
                 \left.\frac{1}{\pi}\frac{\partial J_\nu (z)}{\partial \nu}\right|_{\nu=n}+\left.\frac{(-1)^n}{\pi}\frac{\partial J_\nu (z)}{\partial \nu}\right|_{\nu=-n}, & \hbox{$\nu=n=0,\pm1,\pm2,\ldots$,}
               \end{array}
             \right.
$$
is the Bessel function of the second kind (Weber's Function); see \cite[Section 10.2]{DLMF}. Thus, by \eqref{def:rho}, it follows that
\begin{multline}\label{eq:ratiorho1}
\frac{\rho_{\nu,b}(x)}{\rho_{\nu+1,b}(x)}=\frac{K_{\nu}(2b\sqrt{x})}{\sqrt{x}K_{\nu+1}(2b\sqrt{x})}
=\frac{4b}{\pi^2}\int_0^\infty \frac{1}{(4b^2 x+t)(J_{\nu+1}^2(\sqrt{t})+Y_{\nu+1}^2(\sqrt{t}))t}\ud t
\\=\frac{1}{b \pi^2}\int_0^\infty \frac{1}{(x+t)(J_{\nu+1}^2(2b\sqrt{t})+Y_{\nu+1}^2(2b\sqrt{t}))t}\ud t =\int_{-\infty}^{0}\frac{\ud \sigma_3(t)}{x-t},
\end{multline}
where we have made use a change of variable $t=4b^2t$ in the third equality and
$$\ud \sigma_3 =-\frac{\ud t}{b \pi^2(J_{\nu+1}^2(2b\sqrt{-t})+Y_{\nu+1}^2(2b\sqrt{-t}))t}$$
is a continuous measure on $(-\infty,0]$. This, together with \eqref{eq:recurrho1}, also implies that
\begin{equation}\label{eq:ratiorho2}
\frac{\rho_{\nu+1,b}(x)}{\rho_{\nu,b}(x)}=x\frac{\rho_{\nu-1,b}(x)}{\rho_{\nu,b}(x)}+\frac{\nu}{b}
=x\left(\frac{\rho_{\nu-1,b}(x)}{\rho_{\nu,b}(x)}+\frac{\nu}{bx}\right)
=x\int_{-\infty}^{0}\frac{\ud \sigma_4(t)}{x-t},
\end{equation}
where
$$\ud \sigma_4 =-\left(\frac{1}{b \pi^2(J_{\nu}^2(2b\sqrt{-t})+Y_{\nu}^2(2b\sqrt{-t}))t}-\frac{\nu}{b}\delta_0\right) \ud t$$
is a measure on $(-\infty,0]$ with both a continuous part and a discrete part.

Combining \eqref{eq:rationomega1}--\eqref{eq:ratiorho2} and Theorem \ref{thm:CouVan}, we conclude that both $(\omega_{\mu,a},\omega_{\mu+1,a})$ and $(\rho_{\nu,b},\rho_{\nu+1,b})$ form AT systems on $(0,\infty)$. Theorem \ref{thm:nomality} then follows from this conclusion, as shown in \cite{FMM13}, where
the matrix of measures $\ud \textbf{S}:=\textbf{w}_2^T\textbf{w}_1\ud x$ is called an AT matrix measure. For the convenience of the readers, we include the proof here.

To show that the function $Q_{\textbf{n}_1,\textbf{n}_2}$ has $|\textbf{n}_2|$ sign changes on $(0,\infty)$, we first observe from \eqref{eq:orthoQ1} and \eqref{eq:orthoQ2} that
\begin{equation}\label{eq:orthoThmNor}
\int_0^\infty (p_1(x)\rho_{\nu,b}(x)+p_2(x)\rho_{\nu+1,b}(x))Q_{\textbf{n}_1,\textbf{n}_2}(x)\ud x=0
\end{equation}
for any polynomials $p_i$ with $\deg p_i\leq n_{2,i}-1$, $i=1,2$. Suppose that $Q_{\textbf{n}_1,\textbf{n}_2}$
changes sign at the $k$ points $\{x_1,\ldots,x_k\}\in(0,\infty)$. If $k<|\textbf{n}_{2}|$, it is always possible to find
a multi-index $\textbf{n}_3=(n_{3,1},n_{3,2})$ such that $|\textbf{n}_3|=k$, $n_{3,1}<n_{2,1}$ and $n_{3,2}\leq n_{2,2}$.
Consider the interpolation problem that the function
$$P(x)=q_1(x)\rho_{\nu,b}(x)+q_2(x)\rho_{\nu+1,b}(x), ~~\deg q_1=n_{3,1}, ~~
\deg q_2=n_{3,2}-1,$$
satisfies the interpolation conditions
$$P(x_i)=0, ~~i=1,\ldots, k; \qquad P(x_0)=1, ~~ \textrm{for some other point $x_0\in\mathbb(0,+\infty)$}.$$
Since $(\rho_{\nu,b},\rho_{\nu+1,b})$ is an AT system for the multi-index $\textbf{n}_3+\textbf{e}_1=(n_{3,1}+1,n_{3,2})$, this interpolation problem has a unique solution. Furthermore, $P$ has exactly $k$ sign changes at the same points as $Q_{\textbf{n}_1,\textbf{n}_2}$ and has no other sign changes on $(0,\infty)$. Thus, the function $P(x)Q_{\textbf{n}_1,\textbf{n}_2}(x)$ does not change sign on $(0,\infty)$ and
$$\int_0^\infty P(x)Q_{\textbf{n}_1,\textbf{n}_2}(x) \ud x \neq 0,$$ which contradicts \eqref{eq:orthoThmNor}. As a consequence, $k\geq |\textbf{n}_2|$. Note that $(\omega_{\mu,a},\omega_{\mu+1,a})$ also forms an AT system for $\textbf{n}_1$, it then follows from Definition \ref{def:AT} that $Q_{\textbf{n}_1,\textbf{n}_2}$ has at most $|\textbf{n}_1|-1=|\textbf{n}_2|$ zeros. We therefore conclude that $k=|\textbf{n}_2|$.

For the normality, we suppose that there exists $i\in\{1,2\}$ such that $\deg A_{\textbf{n}_1,\textbf{n}_2,i}<n_{1,i}-1$.
Then $Q_{\textbf{n}_1,\textbf{n}_2}$ would have at most $|\textbf{n}_1|-2=|\textbf{n}_2|-1$ zeros, since $(\omega_{\mu,a},\omega_{\mu+1,a})$ is an AT system for any multi-index $\textbf{n}_1-\textbf{e}_i$, which is again a contradiction.

This completes the proof of Theorem \ref{thm:nomality}
\qed

\paragraph{}
Due to the facts \eqref{eq:rationomega2} and \eqref{eq:ratiorho2}, the polynomials $ A_{\textbf{n}_1,\textbf{n}_2,1}$ and $ A_{\textbf{n}_1,\textbf{n}_2,2}$ can also be interpreted as mixed type multiple orthogonal polynomials with respect to a pair of Nikishin systems \cite{Sorokin94}. It has been shown in \cite{FL11a,FL11b} that Nikishin systems are perfect under various mild constraints on the measures, however, the results therein do not apply directly in the present case.

Finally, we point out that one could easily conclude results similar to Theorem \ref{thm:nomality} for the vector of polynomials $(B_{\textbf{n}_1,\textbf{n}_2,1}, B_{\textbf{n}_1,\textbf{n}_2,2})$ and the function $P_{\textbf{n}_1,\textbf{n}_2}$ by duality.

\section{Explicit formulas}\label{sec:expli formula}
From this section on, we will focus on the mixed type multiple orthogonal polynomials and their linear forms under the condition that each multi-index is on or close to the diagonal and investigate their properties. Recall the notations introduced at the end of Section \ref{sec:MMOP}, we start with an explicit formula for $Q_n$.

\subsection{Explicit formulas for $Q_n$, $A_{n,1}$ and $A_{n,2}$}
\begin{theorem}\label{thm:Qnexpli}
For $n\geq 1$ and $x>0$, we have
\begin{align}\label{eq:Qnexpli}
Q_n(x)
&=\frac{\det
\begin{pmatrix}
\Gamma(\mu+\nu+1) & \Gamma(\mu+\nu+2) & \cdots & \Gamma(\mu+\nu+1+n) \nonumber \\
\vdots &  \vdots & \vdots & \vdots \\
\Gamma(\mu+\nu+n) & \Gamma(\mu+\nu+n+1) & \cdots & \Gamma(\mu+\nu+2n) \nonumber \\
\omega_{\mu,a}(x) & \frac{b^2-a^2}{a}\omega_{\mu+1,a}(x) & \cdots &  \left(\frac{b^2-a^2}{a}\right)^n\omega_{\mu+n,a}(x)
\end{pmatrix} }{\prod_{k=0}^{n-1}k!\Gamma(\mu+\nu+1+k)} \nonumber \\
&=(-1)^n\sum_{j=0}^n \binom{n}{j}\frac{\Gamma(\mu+\nu+1+n)}{\Gamma(\mu+\nu+1+j)}\left(\frac{a^2-b^2}{a}\right)^j \omega_{\mu+j,a}(x).
\end{align}
Alternatively, the following integral representations for $Q_n$ holds:
\begin{align}\label{eq:intofQn}
Q_n(x)&=(-1)^n\frac{(\mu+\nu+1)_n}{a^\mu} \nonumber \\
&~~ \times \frac{1}{2\pi
i}\int_{c-i\infty}^{c+i\infty}{\; }_2 F_1 \left({ -n, t+\mu \atop \mu+\nu+1} \Big{|} 1-\frac{b^2}{a^2} \right)
\frac{\Gamma(t+\mu)}{\Gamma(1-t)} \sin\left(\pi \left(t+\mu+\frac{1}{2}\right)\right)(a^2 x)^{-t} \ud t
\nonumber \\
&=(ax)^\mu \Gamma(\mu+\nu+1+n)n!
\nonumber \\
&~~ \times\oint_{\Sigma}{\; }_0 F_1 \left({ - \atop t+\mu+1} \Big{|} a^2x \right)
\frac{\Gamma(t-n)[(b^2-a^2)x]^t}{\Gamma(t+1)\Gamma(t+\mu+\nu+1)\Gamma(t+\mu+1)} \ud t,
\end{align}
where $c>1$ and $\Sigma$ is a closed contour that encircles $0, 1, \ldots, n$ once in the positive direction.

Furthermore, one has
\begin{equation}\label{eq:Qnderivative}
\frac{\ud}{\ud x}Q_n^{\mu+1,\nu,a,b}(x)=aQ_n^{\mu,\nu,a,b}(x).
\end{equation}
\end{theorem}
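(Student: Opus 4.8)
The plan is to prove Theorem \ref{thm:Qnexpli} in the order the statements are listed, establishing the determinantal formula first, then extracting the explicit sum, then deriving the two integral representations from it, and finally verifying the differential identity \eqref{eq:Qnderivative} as an easy consequence of the explicit sum together with \eqref{eq:recuromega2}. The starting point is the orthogonality system \eqref{eq:orthoQ1}--\eqref{eq:orthoQ2}. When $n$ is the diagonal index, the conditions read $\int_0^\infty Q_n(x)x^j\rho_{\nu,b}(x)\ud x=0$ and $\int_0^\infty Q_n(x)x^j\rho_{\nu+1,b}(x)\ud x=0$ for the appropriate ranges of $j$. Using Proposition \ref{prop:omegahigh}, I would first write the linear form $Q_n$ as a single linear combination $\sum_{j=0}^n c_j\,\omega_{\mu+j,a}(x)$; the degree constraints on $A_{n,1},A_{n,2}$ translate exactly into such an expansion of length $n+1$. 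The key computational input is the set of moments $\int_0^\infty \omega_{\mu+j,a}(x)\,x^l\,\rho_{\nu+r,b}(x)\ud x$, which I would evaluate in closed form. The cleanest route is the Mellin--Barnes representation \eqref{eq:rhoMellin} of $\rho$: inserting it and integrating $\omega_{\mu+j,a}(x)x^{l}x^{-t}$ term by term against the Bessel series \eqref{def:I} produces Gamma-function factors, and the resulting moments will be of the form (constant)$\times\Gamma(\mu+\nu+\text{something})\,a^{-(\cdots)}$, with the ratio $b/a$ entering through powers of $(b^2-a^2)/a$ once one collects terms. This computation is what forces the particular scaling of columns in the determinant.

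Granting these moments, the determinantal formula \eqref{eq:Qnexpli} follows from Cramer's rule / the standard biorthogonalization construction: the coefficients $c_j$ are determined (up to normalization) by requiring orthogonality to the $n$ test functions $x^j\rho_{\nu,b}$ and $x^j\rho_{\nu+1,b}$, and the solvability is guaranteed by the normality already proved in Theorem \ref{thm:nomality}. Expanding the determinant along its last row expresses $Q_n$ as $\sum_j c_j\,((b^2-a^2)/a)^j\omega_{\mu+j,a}$, where each $c_j$ is a cofactor, i.e.\ a minor of a Hankel-type matrix of Gamma values $\Gamma(\mu+\nu+1+i+k)$. These minors are explicitly computable because the Hankel determinant of consecutive Gamma values factors into products of factorials and Pochhammer symbols (this is essentially the normalization constant $\prod_{k=0}^{n-1}k!\,\Gamma(\mu+\nu+1+k)$ appearing in the denominator). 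Carrying out this cofactor evaluation and simplifying the Gamma ratios yields the binomial sum $(-1)^n\sum_{j}\binom{n}{j}\tfrac{\Gamma(\mu+\nu+1+n)}{\Gamma(\mu+\nu+1+j)}((a^2-b^2)/a)^j\omega_{\mu+j,a}$.

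For the integral representations, I would substitute the explicit sum into the appropriate contour integral and recognize the structure. For the first representation I would again replace each $\omega_{\mu+j,a}$ via the Mellin--Barnes formula (writing $\omega_{\mu+j,a}(x)$ through \eqref{def:omega} and the reflection relating $I$ to $K$-type Mellin representations), so that the finite $j$-sum becomes, term by term under the integral, a $_2F_1$ in the variable $1-b^2/a^2$ by the Gauss summation of $\sum_j\binom{n}{j}\frac{(t+\mu)_j}{(\mu+\nu+1)_j}z^j$; the $\sin(\pi(t+\mu+1/2))$ factor arises from the reflection formula $\Gamma(t+\mu)\Gamma(1-t-\mu)$ converting one Gamma quotient into a sine. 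For the second (loop integral) representation I would instead use the series \eqref{def:I} for $\omega_{\mu,a}$ to produce the $_0F_1$ factor and represent the binomial coefficients $\binom{n}{j}$, equivalently the finite sum, as a residue sum $\oint_\Sigma \frac{\Gamma(t-n)}{\Gamma(t+1)}(\cdots)^t\ud t$ picking up simple poles at $t=0,1,\dots,n$; matching the residues to the explicit-sum coefficients fixes the remaining Gamma normalization. The differential relation \eqref{eq:Qnderivative} is then immediate: applying $\ud/\ud x$ to the explicit sum for $Q_n^{\mu+1,\nu,a,b}$ and using \eqref{eq:recuromega2} in the form $\omega_{\mu+j+1,a}'=a\,\omega_{\mu+j,a}$ shifts every index down by one, reproducing exactly $a$ times the sum for $Q_n^{\mu,\nu,a,b}$, since the coefficients $\binom{n}{j}\Gamma(\mu+\nu+1+n)/\Gamma(\mu+\nu+1+j)$ are invariant under the simultaneous shift $\mu\mapsto\mu+1$, $j\mapsto j$. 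The main obstacle I anticipate is the exact evaluation and sign/scaling bookkeeping of the Gamma-valued Hankel minors that produce the cofactors $c_j$; getting the normalization $\prod_{k=0}^{n-1}k!\,\Gamma(\mu+\nu+1+k)$ and the clean collapse to a single binomial sum is where the careful work lies, and where an error in the moment computation would propagate.
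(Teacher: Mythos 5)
Your treatment of both equalities in \eqref{eq:Qnexpli} and of the two integral representations \eqref{eq:intofQn} is, in substance, the paper's own argument: reduce via Propositions \ref{prop:omegahigh} and \ref{prop:rhohigh} to membership in $\mathrm{Span}\{\omega_{\mu,a},\ldots,\omega_{\mu+n,a}\}$ plus the conditions $\int_0^\infty Q_n(x)\rho_{\nu+j,b}(x)\ud x=0$, $j=0,\ldots,n-1$; verify these for the determinant using the closed-form moments $\int_0^\infty \omega_{\mu+i,a}(x)\rho_{\nu+j,b}(x)\ud x=a^{\mu+i}b^{\nu+j}\Gamma(\mu+\nu+1+i+j)/\bigl(2(b^2-a^2)^{\mu+\nu+1+i+j}\bigr)$, which turn the integrated last row into a copy of an earlier row; expand along the last row (your Vandermonde-type evaluation of the Gamma--Hankel minors is correct and is exactly what the paper leaves implicit); then get the Barnes integral by recognizing the terminating ${}_2F_1$ (this is not ``Gauss summation,'' but the idea is right) and the loop integral by the series-plus-residues device. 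One flaw in your route to the moments: inserting the Mellin--Barnes representation \eqref{eq:rhoMellin} of $\rho$ and integrating ``term by term against the Bessel series'' fails as literally described, since every term $\int_0^\infty x^{\mu+j+k+l-t}\ud x$ diverges ($\omega_{\mu+j,a}$ grows like $e^{2a\sqrt{x}}$ and has no Mellin transform); the legitimate order is the reverse --- expand $\omega_{\mu+i,a}$ in its series \eqref{def:I}, integrate against $\rho_{\nu+j,b}$, whose power moments are finite Gamma factors, and sum the resulting binomial series --- or simply quote the tabulated integral, as the paper does.

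The genuine gap is your proof of \eqref{eq:Qnderivative}. The coefficients $\binom{n}{j}\Gamma(\mu+\nu+1+n)/\Gamma(\mu+\nu+1+j)$ are \emph{not} invariant under $\mu\mapsto\mu+1$ with $j$ fixed: they become $\binom{n}{j}\Gamma(\mu+\nu+2+n)/\Gamma(\mu+\nu+2+j)$, which differs by the $j$-dependent factor $(\mu+\nu+1+n)/(\mu+\nu+1+j)$. Carrying out the differentiation honestly, with $\omega_{\mu+1+j,a}'=a\,\omega_{\mu+j,a}$ from \eqref{eq:recuromega2},
\begin{equation*}
\frac{\ud}{\ud x}Q_n^{\mu+1,\nu,a,b}(x)
=a\,(-1)^n\sum_{j=0}^n\binom{n}{j}\frac{\Gamma(\mu+\nu+2+n)}{\Gamma(\mu+\nu+2+j)}
\left(\frac{a^2-b^2}{a}\right)^j\omega_{\mu+j,a}(x)
=a\,Q_n^{\mu,\nu+1,a,b}(x),
\end{equation*}
so the identity holds only if the second parameter on the right is raised to $\nu+1$ (preserving the sum $\mu+\nu$). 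Already for $n=1$: $\frac{\ud}{\ud x}Q_1^{\mu+1,\nu,a,b}=a\bigl(\frac{b^2-a^2}{a}\omega_{\mu+1,a}-(\mu+\nu+2)\omega_{\mu,a}\bigr)=a\,Q_1^{\mu,\nu+1,a,b}$, which is not proportional to $Q_1^{\mu,\nu,a,b}=\frac{b^2-a^2}{a}\omega_{\mu+1,a}-(\mu+\nu+1)\omega_{\mu,a}$. With the normalization fixed by \eqref{eq:Qnexpli} (the same one that makes \eqref{eq:orthoPQ} work), equation \eqref{eq:Qnderivative} as printed therefore appears to be a misprint; the paper's own ``proof'' is the one-line claim that it follows from \eqref{eq:Qnexpli} and \eqref{eq:recuromega2}, and carrying that line out produces the identity displayed above with $Q_n^{\mu,\nu+1,a,b}$ on the right. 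Your invariance claim is precisely the step that would be needed to rescue the printed version, and it is false; replace it by the computation above and state the conclusion with the shifted parameter.
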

\begin{proof}
By \eqref{def:An}--\eqref{def:QnNotation} and \eqref{eq:orthoQ1}--\eqref{eq:orthoQ2}, we need to verify the right hand side of \eqref{eq:Qnexpli} belongs to the linear span of $\{\omega_{\mu,a}(x),x\omega_{\mu,a}(x),\ldots, x^{\lfloor \frac{n}{2}\rfloor}\omega_{\mu,a}(x),\omega_{\mu+1,a}(x),x\omega_{\mu+1,a}(x),\ldots, \\
x^{\lfloor \frac{n-1}{2}\rfloor}\omega_{\mu,a}(x)\}$ and satisfies the orthogonality conditions
\begin{equation*}
\int_0^\infty Q_n(x)(p_1(x)\rho_{\nu,b}(x)+p_2(x)\rho_{\nu+1,b}(x))\ud x=0,
\end{equation*}
for any polynomial $p_{i}(x)$, $i=1,2$, with $\deg{p_1}\leq \lfloor \frac{n-1}{2}\rfloor$ and $\deg{p_2}\leq \lfloor \frac{n-2}{2}\rfloor$. The key observations here are, on account of Propositions \ref{prop:omegahigh} and \ref{prop:rhohigh},
\begin{align}
&\textrm{Span}\{\omega_{\mu,a}(x),\ldots, x^{\lfloor \frac{n}{2}\rfloor}\omega_{\mu,a}(x),\omega_{\mu+1,a}(x),\ldots,
x^{\lfloor \frac{n-1}{2}\rfloor}\omega_{\mu+1,a}(x)\} \nonumber \\
&\qquad \qquad \qquad \qquad \qquad \qquad \qquad \qquad=\textrm{Span}\{\omega_{\mu,a}(x),\ldots, \omega_{\mu+n,a}(x)\},\label{eq:spanequv1}\\
&\textrm{Span}\{\rho_{\nu,b}(x),\ldots, x^{\lfloor \frac{n-1}{2}\rfloor}\rho_{\nu,b}(x),\rho_{\nu+1,b}(x),\ldots,
x^{\lfloor \frac{n-2}{2}\rfloor}\rho_{\nu+1,b}(x)\} \nonumber \\
&\qquad \qquad \qquad \qquad \qquad \qquad\qquad \qquad=\textrm{Span}\{\rho_{\nu,b}(x),\ldots, \rho_{\nu+n-1,b}(x)\}.
\label{eq:spanequv2}
\end{align}
Hence, by normality, it is equivalent to check
\begin{equation}\label{eq:spanspace}
Q_n(x)\in\textrm{Span}\{\omega_{\mu,a}(x),\ldots, \omega_{\mu+n,a}(x)\},
\end{equation}
and satisfies the conditions
\begin{equation}\label{eq:orthocondQnequiv}
\int_0^\infty Q_n(x)\rho_{\nu+j,b}(x)\ud x=0, \qquad j=0,1,\ldots,n-1.
\end{equation}

The condition \eqref{eq:spanspace} follows directly from the determinantal representation in \eqref{eq:Qnexpli} and the fact that $b>a>0$. To show \eqref{eq:orthocondQnequiv}, we note the following definite integral involving both $I_\mu$ and $K_\nu$:
\begin{multline*}
\int_0^{\infty}x^{-\lambda}I_{\mu}(ax)K_{\nu}(bx)\ud x =\frac{a^\mu \Gamma(\frac{1-\lambda+\mu+\nu}{2})\Gamma(\frac{1-\lambda-\nu+\mu}{2})}{2^{\lambda+1}\Gamma(\mu+1)b^{-\lambda+\mu+1}}\\
\times {\; }_2 F_1 \left({\frac{1-\lambda+\mu+\nu}{2},\frac{1-\lambda-\nu+\mu}{2} \atop \mu+1} \Big{|} \frac{a^2}{b^2} \right)
\end{multline*}
for $\Re(\mu+1-\lambda\pm\nu)>0$ and $b>a$; see \cite[Equation 5, Page 676]{Tables}. This tells us that
\begin{multline}\label{eq:intomegarho}
\int_0^\infty \omega_{\mu+i,a}(x)\rho_{\nu+j,b}(x)\ud x=\int_0^\infty x^{\frac{\mu+\nu+i+j}{2}}I_{\mu+i}(2a\sqrt{x})
K_{\nu+j}(2b\sqrt{x})\ud x
\\ =\frac{a^{\mu+i}b^{\nu+j}\Gamma(\mu+\nu+1+i+j)}{2(b^2-a^2)^{\mu+\nu+1+i+j}},\qquad i,j\in\mathbb{Z}_+.
\end{multline}
Thus,
\begin{align}
&\int_0^\infty \det \begin{pmatrix}
\Gamma(\mu+\nu+1) & \Gamma(\mu+\nu+2) & \cdots & \Gamma(\mu+\nu+1+n) \nonumber \\
\vdots &  \vdots & \vdots & \vdots \\
\Gamma(\mu+\nu+n) & \Gamma(\mu+\nu+n+1) & \cdots & \Gamma(\mu+\nu+2n) \nonumber \\
\omega_{\mu,a}(x) & \frac{b^2-a^2}{a}\omega_{\mu+1,a}(x) & \cdots &  \left(\frac{b^2-a^2}{a}\right)^n\omega_{\mu+n,a}(x)
\end{pmatrix}\rho_{\nu+j,b}(x)\ud x \nonumber \\
&=\frac{a^\mu b^{\nu+j}}{2(b^2-a^2)^{\mu+\nu+1+j}} \nonumber \\
& ~~ \qquad \qquad \times \det \begin{pmatrix}
\Gamma(\mu+\nu+1) & \Gamma(\mu+\nu+2) & \cdots & \Gamma(\mu+\nu+1+n) \nonumber \\
\vdots &  \vdots & \vdots & \vdots \\
\Gamma(\mu+\nu+n) & \Gamma(\mu+\nu+n+1) & \cdots & \Gamma(\mu+\nu+2n) \nonumber \\
\Gamma(\mu+\nu+1+j) & \Gamma(\mu+\nu+2+j) & \cdots &  \Gamma(\mu+\nu+1+n+j)
\end{pmatrix} \nonumber \\
&=0 \nonumber
\end{align}
for $j=0,1,\ldots,n-1$, which gives us \eqref{eq:orthocondQnequiv}. By expanding the matrix along the last row and evaluating the associated minors we obtain the second equality in \eqref{eq:Qnexpli}.

To show the Mellin-Barnes integral representation of $Q_n$ in \eqref{eq:intofQn}, we start with the formula
$$I_{\mu}(x)=\left(\frac{x}{2}\right)^\mu \frac{1}{2\pi i}
\int_{c-i\infty}^{c+i\infty}\frac{\Gamma(t)
\sin\left(\pi \left(t+\frac{1}{2}\right)\right)}{\Gamma(1+\mu-t)}\left(\frac{x^2}{4}\right)^{-t}\ud t, \qquad c>0,$$
which can be easily verified with the help of the residue theorem and \eqref{def:I}. Combining this formula and
\eqref{def:omega}, we obtain by a change of variable that
\begin{equation}
\omega_{\mu+j,a}(x)=a^{-\mu-j}\frac{1}{2\pi i}\int_{c-i\infty}^{c+i\infty}
\frac{\Gamma(t+\mu+j)\sin\left(\pi \left(t+\mu+j+\frac{1}{2}\right)\right)}{\Gamma(1-t)}\left(a^2 x\right)^{-t}\ud t,
\end{equation}
for $j=0,1,\ldots,n$, and $c>-\mu-j>1$. This, together with \eqref{eq:Qnexpli}, implies that
\begin{align*}
&Q_n(x)
\\&=\frac{(-1)^n}{a^\mu}\frac{\Gamma(\mu+\nu+1+n)}{\Gamma(\mu+\nu+1)}\frac{1}{2\pi i} \\
&~~ \times \int_{c-i\infty}^{c+i\infty} \sum_{j=0}^n \frac{(-n)_j(t+\mu)_j\Gamma(t+\mu)}
{j!(\mu+\nu+1)_j\Gamma(1-t)}\left(\frac{a^2-b^2}{a^2}\right)^j
\sin\left(\pi \left(t+\mu+\frac{1}{2}\right)\right)
\left(a^2 x\right)^{-t}\ud t \\
&=(-1)^n\frac{(\mu+\nu+1)_n}{a^\mu} \nonumber \\
&~~ \times \frac{1}{2\pi i}\int_{c-i\infty}^{c+i\infty}{\; }_2 F_1 \left({ -n, t+\mu \atop \mu+\nu+1} \Big{|} 1-\frac{b^2}{a^2} \right)
\frac{\Gamma(t+\mu)}{\Gamma(1-t)} \sin\left(\pi \left(t+\mu+\frac{1}{2}\right)\right)(a^2 x)^{-t} \ud t.
\end{align*}
For the second equality in \eqref{eq:intofQn}, we note the following series expansion of $\omega_{\mu+j,a}$:
\begin{equation}\label{eq:expanomega}
\omega_{\mu+j,a}(x)=x^{\frac{\mu+j}{2}}I_{\mu+j}(2a\sqrt{x})
=(ax)^{\mu+j}\sum_{k=0}^\infty\frac{(a^2x)^k}{k!\Gamma(\mu+j+k+1)}, \end{equation}
which follows from $\eqref{def:I}$. Inserting this expansion into \eqref{eq:Qnexpli}, it is readily seen that
\begin{align}\label{eq:Qnintproof1}
Q_n(x)&=(-1)^n\sum_{j=0}^n \binom{n}{j}\frac{\Gamma(\mu+\nu+1+n)}{\Gamma(\mu+\nu+1+j)}\left(\frac{a^2-b^2}{a}\right)^j \sum_{k=0}^\infty\frac{(ax)^{\mu+j}(a^2x)^k}{k!\Gamma(\mu+j+k+1)} \nonumber
\\
&=(-1)^n(ax)^\mu\Gamma(\mu+\nu+1+n) \nonumber \\
&\qquad \qquad \times \sum_{k=0}^\infty\frac{(a^2x)^k}{k!}\sum_{j=0}^n\frac{(-n)_j}{j!}\frac{[(b^2-a^2)x]^j}
{\Gamma(\mu+j+k+1)\Gamma(\mu+\nu+1+j)}.
\end{align}
An easy calculation with the aid of the residue theorem shows that
\begin{multline}\sum_{j=0}^n\frac{(-n)_j}{j!}\frac{[(b^2-a^2)x]^j}
{\Gamma(\mu+j+k+1)\Gamma(\mu+\nu+1+j)}\\
=(-1)^nn!\oint_{\Sigma}\frac{\Gamma(t-n)[(b^2-a^2)x]^t}{\Gamma(t+1)\Gamma(t+\mu+k+1)\Gamma(t+\mu+\nu+1)}\ud t,
\end{multline}
where $\Sigma$ is a closed contour encircling $0, 1, \ldots, n$ once in the positive direction.
Substituting the above formula into \eqref{eq:Qnintproof1}, we obtain by interchanging the summation and integral that
\begin{align*}\label{eq:Qnintproof2}
Q_n(x)&=(ax)^\mu\Gamma(\mu+\nu+1+n)n! \nonumber \\
&\qquad  \times\oint_{\Sigma}\frac{\Gamma(t-n)[(b^2-a^2)x]^t}{\Gamma(t+1)\Gamma(t+\mu+\nu+1)\Gamma(t+\mu+1)} \sum_{k=0}^\infty\frac{(a^2x)^k}{k!(t+\mu+1)_k}\ud t \\
&=(ax)^\mu \Gamma(\mu+\nu+1+n)n! \\ &\qquad \times\oint_{\Sigma}\frac{\Gamma(t-n)((b^2-a^2)x)^t}{\Gamma(t+1)\Gamma(t+\mu+\nu+1)\Gamma(t+\mu+1)}{\; }_0 F_1 \left({ - \atop t+\mu+1} \Big{|} a^2x \right) \ud t,
\end{align*}
as required.

Finally, the differential property \eqref{eq:Qnderivative} follows directly from \eqref{eq:Qnexpli} and \eqref{eq:recuromega2}.

This completes the proof of Theorem \ref{thm:Qnexpli}.
\end{proof}

We next come to the explicit formulas for the mixed type multiple orthogonal polynomials $A_{n,1}$ and $A_{n,2}$.

\begin{theorem}\label{thm:expAn}
With the polynomials $A_{n,i}$, $i=1,2$ defined in \eqref{def:An}, we have, for $n\geq 1$,
\begin{equation}\label{eq:explAn1}
A_{n,1}(x)=(-1)^n\Gamma(\mu+\nu+1+n)\sum_{i=0}^{\lfloor\frac{n}{2}\rfloor}a_{i,n}x^i,
\end{equation}
where
$$a_{i,n}=\left\{
            \begin{array}{ll}
              \frac{1}{\Gamma(\mu+\nu+1)}, & \hbox{$i=0$,} \\
              a^{2i}\sum_{j=2i}^n\binom{n}{j}\binom{j-i-1}{i-1}\frac{(\mu+i+1)_{j-2i}}{\Gamma(\mu+\nu+1+j)}
\left(\frac{b^2-a^2}{a^2}\right)^j, & \hbox{$i\geq 1$,}
            \end{array}
          \right.
$$
and
\begin{equation}\label{eq:explAn2}
A_{n,2}(x)=(-1)^{n+1}\Gamma(\mu+\nu+1+n)a\sum_{i=0}^{\lfloor\frac{n-1}{2}\rfloor}\tilde a_{i,n}x^i,
\end{equation}
where
$$\tilde a_{i,n}=\sum_{j=2i+1}^n\binom{n}{j}\binom{j-i-1}{i}\frac{(\mu+i+1)_{j-2i-1}}{\Gamma(\mu+\nu+1+j)}
\left(\frac{b^2-a^2}{a^2}\right)^j,\qquad i\geq 0.
$$
\end{theorem}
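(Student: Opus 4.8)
The plan is to read $A_{n,1}$ and $A_{n,2}$ off directly from the second expression for $Q_n$ in \eqref{eq:Qnexpli}, namely
\begin{equation*}
Q_n(x)=(-1)^n\sum_{j=0}^n \binom{n}{j}\frac{\Gamma(\mu+\nu+1+n)}{\Gamma(\mu+\nu+1+j)}\left(\frac{a^2-b^2}{a}\right)^j \omega_{\mu+j,a}(x),
\end{equation*}
by collapsing the functions $\omega_{\mu+j,a}$ onto the two generators $\omega_{\mu,a}$ and $\omega_{\mu+1,a}$. Since Theorem \ref{thm:nomality} guarantees that the decomposition $Q_n=A_{n,1}\omega_{\mu,a}+A_{n,2}\omega_{\mu+1,a}$ with $\deg A_{n,1}\le \lfloor n/2\rfloor$ and $\deg A_{n,2}\le \lfloor (n-1)/2\rfloor$ is unique once the normalization of $Q_n$ is fixed, it suffices to produce one such decomposition with polynomial coefficients, and Proposition \ref{prop:omegahigh} furnishes exactly the required tool.

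First I would substitute the recurrence \eqref{eq:recuromegam}, which writes $\omega_{\mu+j,a}=a^{-j}r_{j,\mu}(a^2x)\omega_{\mu,a}+a^{1-j}s_{j,\mu}(a^2x)\omega_{\mu+1,a}$, and then gather the coefficients of $\omega_{\mu,a}$ and $\omega_{\mu+1,a}$. The powers of $a$ telescope: the prefactor $(\tfrac{a^2-b^2}{a})^j a^{-j}$ collapses to $(\tfrac{a^2-b^2}{a^2})^j$ in the $\omega_{\mu,a}$ term, while $(\tfrac{a^2-b^2}{a})^j a^{1-j}$ collapses to $a(\tfrac{a^2-b^2}{a^2})^j$ in the $\omega_{\mu+1,a}$ term, giving
\begin{align*}
A_{n,1}(x)&=(-1)^n\Gamma(\mu+\nu+1+n)\sum_{j=0}^n \binom{n}{j}\frac{1}{\Gamma(\mu+\nu+1+j)}\left(\frac{a^2-b^2}{a^2}\right)^j r_{j,\mu}(a^2x),\\
A_{n,2}(x)&=(-1)^n\Gamma(\mu+\nu+1+n)\,a\sum_{j=0}^n \binom{n}{j}\frac{1}{\Gamma(\mu+\nu+1+j)}\left(\frac{a^2-b^2}{a^2}\right)^j s_{j,\mu}(a^2x).
\end{align*}
Because $\deg r_{j,\mu}=\lfloor j/2\rfloor$ and $\deg s_{j,\mu}=\lfloor (j-1)/2\rfloor$, the top term $j=n$ forces $\deg A_{n,1}=\lfloor n/2\rfloor$ and $\deg A_{n,2}=\lfloor (n-1)/2\rfloor$, a useful consistency check against the multi-index structure $\textbf{n}=(\lfloor n/2\rfloor+1,\lfloor(n-1)/2\rfloor+1)$.

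It then remains to insert the explicit polynomial forms \eqref{def:rmmu} and \eqref{def:smmu}, interchange the two summations, and read off the coefficient of each $x^i$. For $A_{n,1}$ the relevant monomial of $r_{j,\mu}(a^2x)$ is $x^{k+1}$, so I would set $k=i-1$ and let $j$ run over $2i\le j\le n$ (the lower limit coming from $k\le\lfloor(j-2)/2\rfloor$); the sign $(-1)^j$ from \eqref{def:rmmu} combines with $(\tfrac{a^2-b^2}{a^2})^j$ to give $(\tfrac{b^2-a^2}{a^2})^j$, and the $i=0$ term is isolated separately via $r_{0,\mu}=1$, $r_{1,\mu}\equiv 0$, leaving the single contribution $1/\Gamma(\mu+\nu+1)$. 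The treatment of $A_{n,2}$ is entirely parallel, now with $k=i$ and $j$ ranging over $2i+1\le j\le n$, the extra sign $(-1)^{j+1}$ from \eqref{def:smmu} producing the overall $(-1)^{n+1}$ prefactor, and $s_{0,\mu}\equiv0$, $s_{1,\mu}=1$ pinning down the $i=0$ case. Matching the resulting coefficients against the stated binomial and Pochhammer data yields $a_{i,n}$ and $\tilde a_{i,n}$.

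The conceptual content is slight once \eqref{eq:Qnexpli} and Proposition \ref{prop:omegahigh} are in hand; the only genuine obstacle is the bookkeeping in the double-sum reindexing, specifically keeping the floor-valued summation limits and the Pochhammer indices $(\mu+i+1)_{j-2i}$ and $(\mu+i+1)_{j-2i-1}$ correctly aligned when the inner Lommel summation variable is eliminated in favour of the exponent $i$.
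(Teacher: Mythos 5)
Your proposal is correct and is essentially the paper's own proof: the paper likewise expands $Q_n$ via the second line of \eqref{eq:Qnexpli}, applies Proposition \ref{prop:omegahigh} to collapse each $\omega_{\mu+j,a}$ onto $\omega_{\mu,a}$ and $\omega_{\mu+1,a}$ (recording precisely your two intermediate sums for $A_{n,1}$ and $A_{n,2}$, with the same telescoping of the powers of $a$), and then substitutes \eqref{def:rmmu}--\eqref{def:smmu} and reindexes. One caveat on your final "matching" step: carrying out the reindexing for $A_{n,2}$ carefully, the monomial $(a^2x)^k$ with $k=i$ produces an extra factor $a^{2i}$ in $\tilde a_{i,n}$, exactly parallel to the factor $a^{2i}$ present in $a_{i,n}$; this factor is absent from the stated $\tilde a_{i,n}$ (check $n=3$, $i=1$: the coefficient of $x$ in $A_{3,2}$ is $(b^2-a^2)^3/a^3$, whereas the printed formula gives $(b^2-a^2)^3/a^5$), so the discrepancy is a typo in the theorem as printed rather than a flaw in your derivation.
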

\begin{proof}From \eqref{eq:Qnexpli} and Proposition \ref{prop:omegahigh}, it is readily seen that
\begin{multline*}
Q_n(x)=(-1)^n\sum_{j=0}^n \binom{n}{j}\frac{\Gamma(\mu+\nu+1+n)}{\Gamma(\mu+\nu+1+j)}\left(\frac{a^2-b^2}{a}\right)^j
\\
\times\left(a^{-j} r_{j,\mu}(a^2 x)\omega_{\mu,a}(x)+a^{1-j}s_{j,\mu}(a^2 x)\omega_{\mu+1,a}(x)\right).
\end{multline*}
This, together with \eqref{def:QnNotation}, implies that
\begin{align*}
A_{n,1}(x)&=(-1)^n\sum_{j=0}^n \binom{n}{j}\frac{\Gamma(\mu+\nu+1+n)}{\Gamma(\mu+\nu+1+j)}\left(\frac{a^2-b^2}{a^2}\right)^j
r_{j,\mu}(a^2 x), \\
A_{n,2}(x)&=(-1)^n a \sum_{j=0}^n \binom{n}{j}\frac{\Gamma(\mu+\nu+1+n)}{\Gamma(\mu+\nu+1+j)}\left(\frac{a^2-b^2}{a^2}\right)^j
s_{j,\mu}(a^2 x).
\end{align*}
The formulas \eqref{eq:explAn1} and \eqref{eq:explAn2} then follow from substituting \eqref{def:rmmu} and \eqref{def:smmu} into the above two equations and rearranging the expansions.

This completes the proof of Theorem \ref{thm:expAn}.
\end{proof}

\subsection{Explicit formulas for $P_n$, $B_{n,1}$ and $B_{n,2}$}
There are similar explicit formulas for the dual functions $P_n$, $B_{n,1}$ and $B_{n,2}$.
\begin{theorem}\label{thm:Pnexpli}
For $n\geq 1$ and $x>0$, we have
\begin{align}\label{eq:Pnexpli}
P_n(x)
&=c_n\frac{\det
\begin{pmatrix}
\Gamma(\mu+\nu+1) & \cdots & \Gamma(\mu+\nu+n) & \rho_{\nu,b}(x) \nonumber \\
\Gamma(\mu+\nu+2) &  \cdots &  \Gamma(\mu+\nu+n+1)& \frac{b^2-a^2}{b}\rho_{\nu+1,b}(x) \nonumber \\
\vdots &  \vdots & \vdots & \vdots \\
\Gamma(\mu+\nu+1+n) & \cdots &\Gamma(\mu+\nu+2n) & \left(\frac{b^2-a^2}{b}\right)^n\rho_{\nu+n,b}(x)
\end{pmatrix}}{\prod_{k=0}^{n-1}k!\Gamma(\mu+\nu+k+1)} \nonumber \\
&=(-1)^n\frac{2(b^2-a^2)^{\mu+\nu+1}}{a^\mu b^\nu n!}\sum_{j=0}^n \binom{n}{j}\left(\frac{a^2-b^2}{b}\right)^j\frac{\rho_{\nu+j,b}(x)}{\Gamma(\mu+\nu+1+j)} ,
\end{align}
where $$c_n=\frac{2(b^2-a^2)^{\mu+\nu+1}}{a^\mu b^\nu  \Gamma(\mu+\nu+1+n)n!}.$$
Alternatively, the following Mellin-Barnes integral representation for $P_n$ holds:
\begin{multline}\label{eq:intofPn}
P_n(x)=(-1)^n\frac{(b^2-a^2)^{\mu+\nu+1}}{a^\mu b^{2\nu}\Gamma(\mu+\nu+1)n!} \\ \times\frac{1}{2\pi i}\int_{c-i\infty}^{c+i\infty}{\; }_2 F_1 \left({ -n, t+\nu \atop \mu+\nu+1} \Big{|} 1-\frac{a^2}{b^2} \right)\Gamma(t)
\Gamma(t+\nu) (b^2 x)^{-t} \ud t
\end{multline}
with $c>0$ and $x>0$.

Furthermore, one has
\begin{equation}\label{eq:Pnderivative}
\frac{\ud}{\ud x}P_n^{\mu,\nu+1,a,b}(x)=-bP_n^{\mu,\nu,a,b}(x)
\end{equation}
and
\begin{equation}\label{eq:orthoPQ}
\int_0^\infty Q_n(x)P_m(x)\ud x=\delta_{n,m}, \qquad n,m\in\mathbb{N},
\end{equation}
where $Q_n$ is given \eqref{eq:Qnexpli}.
\end{theorem}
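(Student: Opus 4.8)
The plan is to prove Theorem \ref{thm:Pnexpli} in close parallel to the proof of Theorem \ref{thm:Qnexpli}, exploiting the fact that $P_n$ is the linear form of the dual mixed type multiple orthogonal polynomials. First I would establish the determinantal and summation formulas in \eqref{eq:Pnexpli}. By duality (Definition \ref{def:dualMOP}) and the span identity \eqref{eq:spanequv2} via Proposition \ref{prop:rhohigh}, it suffices to verify that the claimed expression lies in $\textrm{Span}\{\rho_{\nu,b}(x),\ldots,\rho_{\nu+n,b}(x)\}$ and satisfies the orthogonality conditions $\int_0^\infty \omega_{\mu+i,a}(x)P_n(x)\ud x=0$ for $i=0,1,\ldots,n-1$. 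The containment is immediate from the determinantal form together with $b>a>0$, exactly as for $Q_n$. For the orthogonality, I would reuse the key integral \eqref{eq:intomegarho}; pairing $\omega_{\mu+i,a}$ against the determinant and pulling the integral inside produces a determinant with two proportional rows (the last row becomes $\Gamma(\mu+\nu+1+i+j)$, matching row $i+1$), hence vanishes for $i\leq n-1$. The summation formula follows by expanding the determinant along its last column and evaluating the cofactors, which are the same Vandermonde-type Gamma determinants already computed in the $Q_n$ case.

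Next I would derive the Mellin-Barnes representation \eqref{eq:intofPn}. Here the analogue of the starting point is \eqref{eq:rhoMellin}, which after the substitution $x\mapsto b^2x$ gives
\begin{equation*}
\rho_{\nu+j,b}(x)=b^{-\nu-j}\frac{1}{2\pi i}\int_{c-i\infty}^{c+i\infty}\Gamma(t)\Gamma(t+\nu+j)(b^2x)^{-t}\ud t,\qquad c>0.
\end{equation*}
Substituting this into the summation form of \eqref{eq:Pnexpli}, interchanging sum and integral, and recognizing the finite sum over $j$ as a terminating $_2F_1$ via the Pochhammer identities $(-n)_j/j!=(-1)^j\binom{n}{j}$ and $\Gamma(t+\nu+j)/\Gamma(t+\nu)=(t+\nu)_j$ would yield the claimed hypergeometric integrand. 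The differential property \eqref{eq:Pnderivative} follows directly from the summation formula \eqref{eq:Pnexpli} and the recurrence \eqref{eq:recurrho2}, just as \eqref{eq:Qnderivative} followed from \eqref{eq:recuromega2}.

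The remaining and genuinely new ingredient is the biorthogonality \eqref{eq:orthoPQ}. For $n\neq m$ this is essentially free: assuming $m<n$, the summation form of $P_m$ expresses it in $\textrm{Span}\{\rho_{\nu,b},\ldots,\rho_{\nu+m,b}\}$, and since $m\leq n-1$ these are exactly the functions against which $Q_n$ is orthogonal by \eqref{eq:orthocondQnequiv}; the symmetric argument (using that $Q_m\in\textrm{Span}\{\omega_{\mu,a},\ldots,\omega_{\mu+m,a}\}$ against the orthogonality of $P_n$) handles $m>n$. The substantive computation is the normalization $\int_0^\infty Q_n(x)P_n(x)\ud x=1$. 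I would compute this by writing both factors through their summation formulas \eqref{eq:Qnexpli} and \eqref{eq:Pnexpli}, then invoking \eqref{eq:intomegarho} term by term, which reduces the double integral to a finite double sum of Gamma quotients times powers of $(b^2-a^2)$. I expect this double-sum evaluation to be the main obstacle: collapsing it to the value $1$ requires a nontrivial hypergeometric summation, and the cleanest route is likely to pair one factor in its Mellin-Barnes form \eqref{eq:intofQn} or \eqref{eq:intofPn} with the other in summation form, so that the $x$-integral contributes a Gamma-function factor and the $t$-integral is then evaluated by residues, the binomial prefactors conspiring via a Vandermonde/Chu identity to leave exactly unity.
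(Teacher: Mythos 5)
Your treatment of the first three assertions tracks the paper's proof essentially verbatim: span containment plus the orthogonality conditions $\int_0^\infty P_n(x)\omega_{\mu+j,a}(x)\ud x=0$, $j=0,\ldots,n-1$, verified through \eqref{eq:intomegarho} (note only the trivial slip that in the $P_n$ determinant it is the last \emph{column}, not row, that becomes proportional to column $i+1$ after integration); the Mellin--Barnes formula via \eqref{eq:rhoMellin} and the terminating ${}_2F_1$; the derivative property via \eqref{eq:recurrho2}; and the off-diagonal cases of \eqref{eq:orthoPQ} by playing the two orthogonality relations against the two span identities. All of that is fine.

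The genuine gap is the diagonal normalization $\int_0^\infty Q_n(x)P_n(x)\ud x=1$, which you correctly identify as the substantive point but then do not prove: you only speculate that a ``nontrivial hypergeometric summation'' or a Mellin--Barnes/residue pairing governed by a Chu--Vandermonde identity ``is likely'' to collapse the double sum to unity. You miss the observation that makes this computation trivial, and which is exactly what the paper uses: apply the orthogonality \eqref{eq:Pnortho} \emph{once more} to the expansion $Q_n=\sum_{j=0}^n c_{j,n}\,\omega_{\mu+j,a}$, so that every term with $j\le n-1$ integrates to zero against $P_n$ and only the top coefficient $c_{n,n}=\bigl(\tfrac{b^2-a^2}{a}\bigr)^n$ survives, giving
\begin{equation*}
\int_0^\infty Q_n(x)P_n(x)\ud x=\Bigl(\tfrac{b^2-a^2}{a}\Bigr)^n\int_0^\infty P_n(x)\,\omega_{\mu+n,a}(x)\ud x .
\end{equation*}
The remaining single integral is then evaluated by inserting the determinantal representation of $P_n$ and using \eqref{eq:intomegarho}: the last column becomes a constant multiple of $\Gamma(\mu+\nu+n+1+j)$, producing the $(n+1)\times(n+1)$ Gamma determinant $\det[\Gamma(\mu+\nu+1+i+j)]_{i,j=0}^{n}=\prod_{k=0}^{n}k!\,\Gamma(\mu+\nu+1+k)$, and the constant $c_n$ is precisely calibrated so the product equals $1$. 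Without this reduction, your fallback route is not merely laborious but technically problematic: pairing the Mellin--Barnes form of $P_n$ against $Q_n$ written in the $\omega_{\mu+j,a}$ basis requires Mellin transforms $\int_0^\infty x^{-t}\omega_{\mu+j,a}(x)\ud x$, which diverge because $\omega_{\mu+j,a}$ grows like $e^{2a\sqrt{x}}$, while the opposite pairing forces the contour constraint $c>1$ from \eqref{eq:intofQn} into conflict with the convergence of $\int_0^\infty x^{-t}\rho_{\nu+j,b}(x)\ud x$ near the origin; neither issue is addressed in your sketch.
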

\begin{proof}
The proof of \eqref{eq:Pnexpli} is similar to that of \eqref{eq:Qnexpli}. By \eqref{eq:spanequv1} and \eqref{eq:spanequv2},
it suffices to verify
\begin{equation}
P_n(x)\in\textrm{Span}\{\rho_{\nu,b}(x),\ldots, \rho_{\nu+n,b}(x)\},
\end{equation}
and satisfies the conditions
\begin{equation}\label{eq:Pnortho}
 \int_0^\infty P_n(x)\omega_{\mu+j,a}(x)\ud x=0, \qquad j=0,1,\ldots,n-1.
\end{equation}
These requirements follow directly from the determinantal representation and \eqref{eq:intomegarho}. A further expansion of the matrix leads to the second equality in \eqref{eq:Pnexpli}.

To show \eqref{eq:intofPn}, we note that
\begin{equation}
\rho_{\nu+j,b}(x)=\frac{b^{-\nu-j}}{4 \pi i}\int_{c-i\infty}^{c+i\infty}\Gamma(t)\Gamma(t+\nu+j)
\left(b^2 x\right)^{-t}\ud t,
\quad c>0,
\end{equation}
see \eqref{eq:rhoMellin}. This, together with \eqref{eq:Pnexpli}, implies that
\begin{align*}
P_n(x)&=(-1)^n\frac{(b^2-a^2)^{\mu+\nu+1}}{a^\mu b^{2\nu} n!}\sum_{j=0}^n \binom{n}{j}\left(\frac{a^2-b^2}{b^2}\right)^j\frac{1}{\Gamma(\mu+\nu+1+j)} \\
&\qquad \qquad \qquad \qquad \times \frac{1}{2 \pi i}\int_{c-i\infty}^{c+i\infty}\Gamma(t)\Gamma(t+\nu+j)
\left(b^2 x\right)^{-t}\ud t \\
&=(-1)^n\frac{(b^2-a^2)^{\mu+\nu+1}}{a^\mu b^{2\nu}\Gamma(\mu+\nu+1)n!} \\
&\qquad \times \frac{1}{2\pi i}\int_{c-i\infty}^{c+i\infty}\sum_{j=0}^n \left(\frac{(-n)_j(t+\nu)_j}{j!(\mu+\nu+1)_j}
\left(\frac{b^2-a^2}{b^2}\right)^j\right)\Gamma(t) \Gamma(t+\nu) (b^2 x)^{-t} \ud t \\
&=(-1)^n\frac{(b^2-a^2)^{\mu+\nu+1}}{a^\mu b^{2\nu}\Gamma(\mu+\nu+1)n!} \\
&\qquad \times \frac{1}{2\pi i}\int_{c-i\infty}^{c+i\infty}{\; }_2 F_1 \left({ -n, t+\nu \atop \mu+\nu+1} \Big{|} 1-\frac{a^2}{b^2} \right)
\Gamma(t) \Gamma(t+\nu) (b^2 x)^{-t} \ud t, \end{align*}
as expected.

Finally, the differential property \eqref{eq:Pnderivative} follows directly from \eqref{eq:Pnexpli} and \eqref{eq:recurrho2}. For the biorthogonality properties \eqref{eq:orthoPQ}, by \eqref{eq:Pnortho}, it remains to check the case when $m=n$. Note that the coefficient of $\omega_{\mu+n,a}$ in \eqref{eq:Qnexpli} is $\left(\frac{b^2-a^2}{a}\right)^n$, it is readily seen from \eqref{eq:Pnexpli} and \eqref{eq:intomegarho} that
\begin{align*}
&\int_0^\infty Q_n(x)P_n(x)\ud x = \left(\frac{b^2-a^2}{a}\right)^n\int_0^\infty P_n(x)\omega_{\mu+n,a}(x)\ud x \\
&=\frac{c_n\left(\frac{b^2-a^2}{a}\right)^n}{\prod_{k=0}^{n-1}k!\Gamma(\mu+\nu+1+k)}  \\
&~~ \times
\det
\begin{pmatrix}
\Gamma(\mu+\nu+1) & \cdots & \Gamma(\mu+\nu+n) & \frac{a^{\mu+n}b^\nu}{2(b^2-a^2)^{\mu+\nu+n+1}}\Gamma(\mu+\nu+n+1) \nonumber \\
\Gamma(\mu+\nu+2) &  \cdots &  \Gamma(\mu+\nu+n+1)& \frac{a^{\mu+n}b^\nu}{2(b^2-a^2)^{\mu+\nu+n+1}}\Gamma(\mu+\nu+n+2) \nonumber \\
\vdots &  \vdots & \vdots & \vdots \\
\Gamma(\mu+\nu+1+n) & \cdots &\Gamma(\mu+\nu+2n) & \frac{a^{\mu+n}b^\nu}{2(b^2-a^2)^{\mu+\nu+n+1}}\Gamma(\mu+\nu+1+2n)
\end{pmatrix}\\
&~~=1.
\end{align*}

This completes the proof of Theorem \ref{thm:Pnexpli}.                                                                               \end{proof}

Combining Theorem \ref{thm:Pnexpli} and Proposition \ref{prop:rhohigh},
explicit formulas for the dual polynomials $B_{n,1}$ and $B_{n,2}$ are immediate, which are given in the following theorem
and the proof is omitted.
\begin{theorem}\label{thm:expBn}
With the polynomials $B_{n,i}$, $i=1,2$ defined in \eqref{def:Bn}, we have, for $n\geq 1$,
\begin{equation}\label{eq:explBn1}
B_{n,1}(x)=(-1)^n\frac{2(b^2-a^2)^{\mu+\nu+1}}{a^\mu b^\nu n!}\sum_{i=0}^{\lfloor\frac{n}{2}\rfloor}b_{i,n}x^i,
\end{equation}
where
$$b_{i,n}=\left\{
            \begin{array}{ll}
              \frac{1}{\Gamma(\mu+\nu+1)}, & \hbox{$i=0$,} \\
              b^{2i}\sum_{j=2i}^n\binom{n}{j}\binom{j-i-1}{i-1}\frac{(\nu+i+1)_{j-2i}}{\Gamma(\mu+\nu+1+j)}
\left(\frac{a^2-b^2}{b^2}\right)^j, & \hbox{$i\geq 1$,}
            \end{array}
          \right.
$$
and
\begin{equation}\label{eq:explBn2}
B_{n,2}(x)=(-1)^{n}\frac{2(b^2-a^2)^{\mu+\nu+1}}{a^\mu b^{\nu-1} n!}\sum_{i=0}^{\lfloor\frac{n-1}{2}\rfloor}\tilde b_{i,n}x^i,
\end{equation}
where
$$\tilde b_{i,n}=\sum_{j=2i+1}^n\binom{n}{j}\binom{j-i-1}{i}\frac{(\nu+i+1)_{j-2i-1}}{\Gamma(\mu+\nu+1+j)}
\left(\frac{a^2-b^2}{b^2}\right)^j,\qquad i\geq 0.
$$
\end{theorem}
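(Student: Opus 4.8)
The plan is to obtain Theorem~\ref{thm:expBn} as the exact dual of Theorem~\ref{thm:expAn}: rather than expanding $Q_n$ via Proposition~\ref{prop:omegahigh}, I would expand the already-known closed form of the linear combination $P_n$ via Proposition~\ref{prop:rhohigh}, and then read off $B_{n,1}$ and $B_{n,2}$ by matching coefficients. First I would take the second equality in \eqref{eq:Pnexpli}, which writes $P_n$ as a single sum over $\rho_{\nu+j,b}$, $j=0,\ldots,n$, and substitute the reduction \eqref{eq:rhohighrecu} for each $\rho_{\nu+j,b}$. This turns $P_n$ into an explicit linear combination of $\rho_{\nu,b}$ and $\rho_{\nu+1,b}$ only, whose coefficients I can match against the definition $P_n=B_{n,1}\rho_{\nu,b}+B_{n,2}\rho_{\nu+1,b}$ from \eqref{def:PnNotation}. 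By the normality established in Theorem~\ref{thm:nomality} this decomposition is unique, so the matching is unambiguous and gives
\begin{equation*}
B_{n,1}(x)=(-1)^n\frac{2(b^2-a^2)^{\mu+\nu+1}}{a^\mu b^\nu n!}\sum_{j=0}^n\binom{n}{j}\frac{(-b)^{-j}}{\Gamma(\mu+\nu+1+j)}\left(\frac{a^2-b^2}{b}\right)^j r_{j,\nu}(b^2x),
\end{equation*}
together with the identical expression for $B_{n,2}$ carrying $(-b)^{1-j}s_{j,\nu}(b^2x)$ in place of $(-b)^{-j}r_{j,\nu}(b^2x)$.

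Next I would simplify the scalar prefactors using the elementary identities $(-b)^{-j}\left(\frac{a^2-b^2}{b}\right)^j=\left(\frac{b^2-a^2}{b^2}\right)^j$ and $(-b)^{1-j}\left(\frac{a^2-b^2}{b}\right)^j=-b\left(\frac{b^2-a^2}{b^2}\right)^j$. After this, the two sums read $\sum_{j}\binom{n}{j}\left(\frac{b^2-a^2}{b^2}\right)^j r_{j,\nu}(b^2x)/\Gamma(\mu+\nu+1+j)$ for $B_{n,1}$ and $-b$ times the same sum with $s_{j,\nu}$ for $B_{n,2}$, the extra factor $-b$ accounting for the change of $b^\nu$ into $b^{\nu-1}$ and for the sign in \eqref{eq:explBn2}. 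At this stage the computation is word-for-word the dual of the two intermediate displays for $A_{n,1},A_{n,2}$ in the proof of Theorem~\ref{thm:expAn}, under the substitutions $a\mapsto b$, $\mu\mapsto\nu$ and $r_{j,\mu},s_{j,\mu}\mapsto r_{j,\nu},s_{j,\nu}$, with the global constant $c_n$ inherited from $P_n$.

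Finally I would insert the explicit polynomial expansions \eqref{def:rmmu} of $r_{j,\nu}$ and \eqref{def:smmu} of $s_{j,\nu}$, interchange the order of summation, and collect the coefficient of each power $x^i$. For $B_{n,1}$ the $x^i$-term with $i\geq 1$ comes from the $(l+1)$-st term of $r_{j,\nu}$ with $l=i-1$, which forces $2i\leq j\leq n$ and produces the weight $\binom{j-i-1}{i-1}(\nu+i+1)_{j-2i}$ together with a power of $b$ read off from the argument $b^2x$; the term $i=0$ is isolated, since $r_{0,\nu}=1$ and only $j=0$ contributes, giving $b_{0,n}=1/\Gamma(\mu+\nu+1)$. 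For $B_{n,2}$ the $x^i$-term comes from the $l=i$ term of $s_{j,\nu}$, forcing $2i+1\leq j\leq n$ and producing $\binom{j-i-1}{i}(\nu+i+1)_{j-2i-1}$. The alternating sign $(-1)^j$ carried by $r_{j,\nu}$ and $s_{j,\nu}$ combines with $\left(\frac{b^2-a^2}{b^2}\right)^j$ to give $\left(\frac{a^2-b^2}{b^2}\right)^j$, exactly the form appearing in \eqref{eq:explBn1}--\eqref{eq:explBn2}. Since the closed forms of $r_{m,\nu}$ and $s_{m,\nu}$ are already in hand, the only genuine work is this double-sum reindexing, and the sole obstacle is bookkeeping: keeping the powers of $b$ arising from the argument $b^2x$, the factors $(-b)^{\pm}$ supplied by Proposition~\ref{prop:rhohigh}, and the signs $(-1)^j$ consistently aligned, precisely as in the corresponding step for the $A$ polynomials.
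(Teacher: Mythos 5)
Your approach is exactly the one the paper intends: the paper omits the proof of Theorem \ref{thm:expBn} precisely because it is the routine combination of Theorem \ref{thm:Pnexpli} and Proposition \ref{prop:rhohigh} that you describe, mirroring word-for-word the proof of Theorem \ref{thm:expAn}. Your intermediate expression for $B_{n,1}$, the simplifications $(-b)^{-j}\bigl(\tfrac{a^2-b^2}{b}\bigr)^j=\bigl(\tfrac{b^2-a^2}{b^2}\bigr)^j$ and $(-b)^{1-j}\bigl(\tfrac{a^2-b^2}{b}\bigr)^j=-b\bigl(\tfrac{b^2-a^2}{b^2}\bigr)^j$, and the reindexing of the double sum via \eqref{def:rmmu}--\eqref{def:smmu} are all correct, and uniqueness of the decomposition in $\rho_{\nu,b},\rho_{\nu+1,b}$ justifies the coefficient matching.

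One caveat on the final bookkeeping, which you assert rather than carry out: executing your own plan carefully, the $x^i$-coefficient of $B_{n,2}$ inherits a factor $b^{2i}$ from the argument $b^2x$ of $s_{j,\nu}$, exactly as you correctly note it does for $B_{n,1}$ (where it appears in the stated $b_{i,n}$). The display \eqref{eq:explBn2} as printed carries no $b^{2i}$ inside $\tilde b_{i,n}$, so your computation does not land ``exactly'' on it; it lands on \eqref{eq:explBn2} with $\tilde b_{i,n}$ replaced by $b^{2i}\tilde b_{i,n}$. The same inconsistency is present in $\tilde a_{i,n}$ of Theorem \ref{thm:expAn}: for instance at $n=3$, direct use of \eqref{eq:recuromega1} shows the coefficient of $x$ in $A_{3,2}$ is $(b^2-a^2)^3/a^3$, whereas the printed formula gives $(b^2-a^2)^3/a^5$. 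So this is evidently a typo in the paper rather than a flaw in your method; still, a complete write-up should flag the missing power of $b$ (or state the corrected coefficient) instead of claiming exact agreement with the printed formula.
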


\section{Limiting forms of $Q_n$ and $P_n$}\label{sec:limitingform}
Based on the explicit formulas for $Q_n$ and $P_n$ established in Section \ref{sec:expli formula}, we are able to derive various limiting forms of $Q_n$ and $P_n$.

\begin{theorem}\label{thm:Qlim}
We have
\begin{equation}\label{eq:Qnazero}
\lim_{a \to 0, \; b/\sqrt{a} \to k}Q_n\left(\frac{x}{a}\right)
=(-1)^n\frac{(\mu+\nu+1)_n}{\Gamma(\mu+1)} {\; }_1 F_2 \left({ -n \atop \mu+\nu+1, \mu+1} \Big{|} k^2x \right)x^\mu,
\end{equation}
uniformly for $x$ in any compact subset of $(0,+\infty)$, where the notation $\lim_{a \to 0, \; b/\sqrt{a} \to k}$
means that both $a,b \to 0$ with $b/\sqrt{a}\to k>0$.

Suppose that $c=b-a>0$ fixed, one has
\begin{equation}\label{eq:Qnainfty}
\lim_{a \to +\infty, \; b-a=c}\frac{2\sqrt{\pi a}}{e^{2a x}}Q_n(x^2)=x^{\mu-\frac{1}{2}}L_n^{(\mu+\nu)}(2cx),
\end{equation}
uniformly for $x$ in any compact subset of $(0,+\infty)$, where the notation $\lim_{a \to +\infty, \; b-a=c}$ means that
both $a,b \to +\infty$ with $b-a=c>0$ and
$$
L_n^{(\alpha)}(x)
=(-1)^n\sum_{j=0}^n \binom{n}{j}\frac{\Gamma(\alpha+1+n)}{\Gamma(\alpha+1+j)}\left(-x \right)^j,\qquad \alpha>-1,
$$
is the monic generalized Laguerre polynomial.
\end{theorem}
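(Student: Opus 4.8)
The plan is to read off both limits directly from the closed-form expression for $Q_n$ established in Theorem \ref{thm:Qnexpli}, namely
\begin{equation*}
Q_n(x)=(-1)^n\sum_{j=0}^n \binom{n}{j}\frac{\Gamma(\mu+\nu+1+n)}{\Gamma(\mu+\nu+1+j)}\left(\frac{a^2-b^2}{a}\right)^j \omega_{\mu+j,a}(x),
\end{equation*}
so that in each regime it suffices to track the limiting behaviour of the finitely many building blocks $\omega_{\mu+j,a}$ together with the scalar coefficients $\left(\frac{a^2-b^2}{a}\right)^j$.

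For the first limit I would substitute $x\mapsto x/a$ into the series expansion \eqref{eq:expanomega}, which gives
\begin{equation*}
\omega_{\mu+j,a}(x/a)=x^{\mu+j}\sum_{k=0}^\infty\frac{(ax)^k}{k!\,\Gamma(\mu+j+k+1)}.
\end{equation*}
Since $ax\to 0$ uniformly on compact subsets of $(0,\infty)$, each such expression converges to $x^{\mu+j}/\Gamma(\mu+j+1)$, uniformly because the tail of the series is $O(ax)$. Simultaneously, writing $\frac{a^2-b^2}{a}=a-\frac{b^2}{a}$ and using $b/\sqrt{a}\to k$, hence $b^2/a\to k^2$, the coefficient tends to $(-k^2)^j$. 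Substituting both limits into the finite sum and rewriting $\binom{n}{j}(-1)^j=(-n)_j/j!$, $\Gamma(\mu+\nu+1+n)/\Gamma(\mu+\nu+1+j)=(\mu+\nu+1)_n/(\mu+\nu+1)_j$, and $1/\Gamma(\mu+j+1)=1/(\Gamma(\mu+1)(\mu+1)_j)$ assembles the series into the advertised $ {\; }_1 F_2$, establishing \eqref{eq:Qnazero}.

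For the second limit I would put $x\mapsto x^2$, so that $\omega_{\mu+j,a}(x^2)=x^{\mu+j}I_{\mu+j}(2ax)$, and now the argument $2ax\to +\infty$ forces the use of the large-argument asymptotics \eqref{eq:asyI2}. This yields
\begin{equation*}
\frac{2\sqrt{\pi a}}{e^{2ax}}\,\omega_{\mu+j,a}(x^2)=x^{\mu+j-\frac12}\left(1+O\!\left(\tfrac1a\right)\right),
\end{equation*}
the crucial point being that the leading exponential $e^{2ax}$ is independent of $j$ and is exactly cancelled by the prefactor $2\sqrt{\pi a}/e^{2ax}$. Meanwhile, with $b-a=c$ fixed one has $\frac{a^2-b^2}{a}=-c\,\frac{2a+c}{a}\to -2c$, so $\left(\frac{a^2-b^2}{a}\right)^j\to(-2c)^j$. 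Feeding these into the explicit sum and comparing with the stated monic-Laguerre expansion $L_n^{(\mu+\nu)}(2cx)$ gives \eqref{eq:Qnainfty}.

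The main obstacle is securing uniformity in the second regime: unlike the first limit, where the convergence of a power series at $ax\to 0$ is transparent, here one relies on the Bessel asymptotics \eqref{eq:asyI2} holding uniformly for $x$ in compact subsets of $(0,\infty)$ bounded away from the origin. I would note that the $O(1/z)$ error in \eqref{eq:asyI2} is uniform on such sets, and since the sum over $j$ comprises only $n+1$ terms with coefficients converging to finite limits, the accumulated relative error is $O(1/a)$ and vanishes. This justifies interchanging the limit with the finite summation and secures uniform convergence on compacta in both cases.
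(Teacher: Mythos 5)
Your proposal is correct and follows essentially the same route as the paper's proof: both limits are read off from the explicit sum \eqref{eq:Qnexpli}, using the series expansion \eqref{eq:expanomega} of $\omega_{\mu+j,a}$ together with $\frac{a^2-b^2}{a}\to -k^2$ for the first limit, and the large-argument asymptotics \eqref{eq:asyI2} together with $\frac{a^2-b^2}{a}\to -2c$ for the second, the finiteness of the sum justifying the interchange of limits. Your added remarks on uniformity of the $O(1/z)$ error and the Pochhammer identities needed to assemble the ${}_1F_2$ are exactly the (implicit) details of the paper's argument.
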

\begin{proof}
From \eqref{eq:expanomega}, it is easily seen that
$$\lim_{a\to 0}\omega_{\mu+j,a}\left(\frac{x}{a}\right)=\frac{x^{\mu+j}}{\Gamma(\mu+1+j)},
\qquad j=0,1,\ldots,n,
$$
uniformly valid for $x$ belonging to any compact subset of $(0,+\infty)$. This, together with \eqref{eq:Qnexpli} and the fact that
$$\lim_{a \to 0, \; b/\sqrt{a} \to k}\frac{a^2-b^2}{a}=-k^2,$$
implies
\begin{align*}
&\lim_{a \to 0, \; b/\sqrt{a} \to k}Q_n\left(\frac{x}{a}\right) \nonumber
\\&=(-1)^n\Gamma(\mu+\nu+1+n)
\sum_{j=0}^n\frac{(-n)_j}{j!}\frac{x^{\mu+j}}{\Gamma(\mu+\nu+1+j)\Gamma(\mu+1+j)}k^{2j}  \nonumber \\
&=(-1)^n\frac{(\mu+\nu+1)_n}{\Gamma(\mu+1)}
{\; }_1 F_2 \left({ -n \atop \mu+\nu+1, \mu+1} \Big{|} k^2x \right)x^\mu,
\end{align*}
which is \eqref{eq:Qnazero}.

To show \eqref{eq:Qnainfty}, we note that, on account of \eqref{eq:asyI2} and \eqref{def:omega},
$$\lim_{a\to +\infty}\frac{2\sqrt{\pi a}}{e^{2ax}}\omega_{\mu+j,a}(x^2)=x^{\mu+j-\frac{1}{2}},
\qquad j=0,1,\ldots,n,$$
uniformly valid for $x$ belonging to any compact subset of $(0,+\infty)$, and
$$\lim_{a \to +\infty, \; b-a=c}\frac{a^2-b^2}{a}=-2c.$$
Substituting the above two equations into \eqref{eq:Qnexpli} gives us \eqref{eq:Qnainfty}.

This completes the proof of Theorem \ref{thm:Qlim}.
\end{proof}

The results for $P_n$ are stated in the following theorem.
\begin{theorem}\label{thm:Plim}
Suppose that $c=b-a>0$ fixed, we have
\begin{equation}\label{eq:Pnainfty}
\lim_{b \to +\infty, \; b-a=c}\sqrt{\frac{4b}{\pi}}e^{2 b x}P_n(x^2)=
\frac{2(b^2-a^2)^{\mu+\nu+1}}{a^\mu b^\nu  \Gamma(\mu+\nu+1+n)n!}x^{\nu-\frac{1}{2}}L_n^{(\mu+\nu)}(2cx),
\end{equation}
uniformly for $x$ in any compact subset of $(0,+\infty)$.

Furthermore, one has
\begin{equation}\label{eq:Pnxsmall}
\lim_{x \to 0}P_n(x)=(-1)^n\frac{(b^2-a^2)^{\mu+\nu+1}\Gamma(\nu)}{a^\mu b^{2\nu}\Gamma(\mu+\nu+1)n!}
{\; }_2 F_1 \left({ -n, \nu \atop \mu+\nu+1} \Big{|} 1-\frac{a^2}{b^2} \right)
\end{equation}
and the following Mehler-Heine asymptotics
\begin{multline}\label{eq:PnMHasy}
\lim_{n\to \infty} (-1)^nn!(n+1)^{\nu}P_n\left(\frac{x}{(n+1)(b^2-a^2)}\right)
=\frac{(b^2-a^2)^{\mu+1}}{a^\mu}\mathop{{G^{{2,0}}_{{0,3}}}\/}\nolimits\!\left({- \atop 0, \nu, -\mu} \Big{|} x\right)
\\=\frac{(b^2-a^2)^{\mu+1}}{a^\mu}\frac{1}{2\pi i}\int_{c-i\infty}^{c+i\infty}
\frac{\Gamma(t)\Gamma(t+\nu)}{\Gamma(1+\mu-t)}x^{-t} \ud t, \quad c>0,
\end{multline}
where $\mathop{{G^{{m,0}}_{{0,q}}}\/}$ stands for the Meijer G-function; cf. \cite[Section 16.17]{DLMF}.
\end{theorem}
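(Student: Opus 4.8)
The plan is to read off all three limits from the two explicit descriptions of $P_n$ in Theorem~\ref{thm:Pnexpli}, using the sum form \eqref{eq:Pnexpli} for the first two statements and the Mellin--Barnes form \eqref{eq:intofPn} for the Mehler--Heine asymptotics. The first two limits are direct analogues of the proof of Theorem~\ref{thm:Qlim}, whereas \eqref{eq:PnMHasy} is the genuinely new computation.

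For \eqref{eq:Pnainfty} I would substitute the large-argument behaviour \eqref{eq:asyrho2} into \eqref{eq:Pnexpli}. Writing $x^2$ for the argument, \eqref{eq:asyrho2} gives $\sqrt{4b/\pi}\,e^{2bx}\rho_{\nu+j,b}(x^2)\to x^{\nu+j-1/2}$ uniformly for $x$ in compact subsets of $(0,+\infty)$, since $2b\sqrt{x^2}=2bx\to+\infty$ uniformly there and there are only finitely many terms $j=0,\dots,n$. Together with $\frac{a^2-b^2}{b}\to-2c$ as $b\to+\infty$ with $b-a=c$, this turns the finite sum in \eqref{eq:Pnexpli} into $x^{\nu-1/2}\sum_{j=0}^n\binom{n}{j}\frac{(-2cx)^j}{\Gamma(\mu+\nu+1+j)}$, and the latter equals $(-1)^n[\Gamma(\mu+\nu+1+n)]^{-1}x^{\nu-1/2}L_n^{(\mu+\nu)}(2cx)$ by the definition of the monic Laguerre polynomial recalled in Theorem~\ref{thm:Qlim}; combined with the prefactor of \eqref{eq:Pnexpli} this produces \eqref{eq:Pnainfty}. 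For \eqref{eq:Pnxsmall} I would instead use the small-argument behaviour \eqref{eq:asyrho1}, namely $\rho_{\nu+j,b}(x)\to\Gamma(\nu+j)/(2b^{\nu+j})$ as $x\to0$ (finite because $\nu>0$); inserting this into \eqref{eq:Pnexpli} and rewriting $\binom{n}{j}$, $\Gamma(\nu+j)$ and $\Gamma(\mu+\nu+1+j)$ through Pochhammer symbols collapses the sum into the terminating series ${}_2F_1(-n,\nu;\mu+\nu+1;1-a^2/b^2)$, which is the stated expression.

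The heart of the proof is \eqref{eq:PnMHasy}. Here I would start from the Mellin--Barnes representation \eqref{eq:intofPn} and insert the scaling $x\mapsto x/[(n+1)(b^2-a^2)]$, so that $(b^2x)^{-t}$ becomes $(n+1)^t x^{-t}(b^2/(b^2-a^2))^{-t}$. The remaining $n$-dependence sits in the factor ${}_2F_1(-n,t+\nu;\mu+\nu+1;w)$ with $w=1-a^2/b^2\in(0,1)$. Its large-$n$ behaviour I would extract from Euler's integral representation, ${}_2F_1(-n,t+\nu;\mu+\nu+1;w)=\frac{\Gamma(\mu+\nu+1)}{\Gamma(t+\nu)\Gamma(\mu+1-t)}\int_0^1 s^{t+\nu-1}(1-s)^{\mu-t}(1-ws)^n\ud s$, valid on a contour $\Re t=c$ with $0<c<\mu+1$ (so that $\Re(t+\nu)>0$ and $\Re(\mu+1-t)>0$). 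Since $1-ws$ is maximal at $s=0$, Laplace's method yields ${}_2F_1(-n,t+\nu;\mu+\nu+1;w)\sim\frac{\Gamma(\mu+\nu+1)}{\Gamma(\mu+1-t)}(nw)^{-(t+\nu)}$. Feeding this back, the powers of $n$ cancel exactly ($n^\nu$ from the prefactor $(n+1)^\nu$, $n^{-(t+\nu)}$ from the hypergeometric, $n^{t}$ from the scaling), the identity $w\cdot b^2/(b^2-a^2)=1$ removes the $t$-dependent constants, and the surviving prefactor collapses to $(b^2-a^2)^{\mu+1}/a^\mu$; the remaining contour integral $\frac{1}{2\pi i}\int_{c-i\infty}^{c+i\infty}\frac{\Gamma(t)\Gamma(t+\nu)}{\Gamma(\mu+1-t)}x^{-t}\ud t$ is, after the substitution $t\mapsto-s$, precisely the Mellin--Barnes definition of $G^{2,0}_{0,3}$, matching \eqref{eq:PnMHasy}.

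The main obstacle is this last step: justifying that the limit $n\to\infty$ may be taken under the Mellin--Barnes integral. This requires the Laplace-method asymptotic for ${}_2F_1(-n,t+\nu;\mu+\nu+1;w)$ to hold uniformly as $t$ runs along the vertical line $\Re t=c$, together with an $n$-independent dominating bound. The needed integrability is supplied by Stirling's formula, which makes $\Gamma(t)\Gamma(t+\nu)/\Gamma(\mu+1-t)$ decay like $e^{-\pi|\Im t|/2}$ along the contour; the delicate point is to control the error term in the hypergeometric asymptotic uniformly in $\Im t$ (where the parameter $t+\nu$ acquires large imaginary part), and that is where the estimate must be carried out with care.
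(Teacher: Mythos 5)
Your proposal is correct and, for all three limits, follows the same structure as the paper's own proof: \eqref{eq:Pnainfty} and \eqref{eq:Pnxsmall} are read off termwise from the finite sum \eqref{eq:Pnexpli} using the large- and small-argument asymptotics of $\rho_{\nu+j,b}$, and \eqref{eq:PnMHasy} comes from the Mellin--Barnes representation \eqref{eq:intofPn} after inserting the large-$n$ behaviour of ${}_2F_1\left(-n,t+\nu;\mu+\nu+1;1-a^2/b^2\right)$ and observing that the powers of $n+1$ and the $t$-dependent constants cancel, leaving exactly the Meijer G-integral. The one place you diverge is the source of that hypergeometric asymptotic: the paper quotes it directly from \cite[Equation 2.6]{Temme}, whereas you rederive it from Euler's integral representation by Laplace's method. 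Your derivation is sound — it requires taking the contour with $0<\Re t<\mu+1$, which is permissible since $\mu>-1$, and the final integral can then be deformed back to any $c>0$ because $1/\Gamma(1+\mu-t)$ is entire — and it has the advantage of being self-contained, at the cost of a somewhat longer argument. On the point you flag as delicate, namely uniformity of the asymptotic in $\Im t$ so that dominated convergence applies, the paper is no more detailed than you are: it simply notes that the gamma factors decay exponentially along the contour and invokes the dominated convergence theorem, leaving the uniform control of the ${}_2F_1$ factor implicit. So your proposal matches the paper's level of rigor while replacing its one external citation by an elementary argument.
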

\begin{proof}
By \eqref{eq:asyK2} and \eqref{def:rho}, it is readily seen that
\begin{equation}\label{eq:limrho}
\lim_{b\to +\infty}\sqrt{\frac{4b}{\pi}}e^{2 b x}\rho_{\nu+j,b}(x^2)=x^{\nu+j-\frac{1}{2}},
\qquad j=0,1,\ldots,n,
\end{equation}
uniformly valid for $x$ belonging to any compact subset of $(0,+\infty)$. A combination of \eqref{eq:limrho},
\eqref{eq:Pnexpli} and the fact that
$$\lim_{b \to +\infty, \; b-a=c}\frac{a^2-b^2}{b}=-2c$$
gives us \eqref{eq:Pnainfty}.

On account of \eqref{eq:asyrho1}, the limit \eqref{eq:Pnxsmall} is immediate from \eqref{eq:Pnexpli}. To
show \eqref{eq:PnMHasy}, we make use of the following asymptotics of Gauss hypergeometric function for large parameter
(see \cite[Equation 2.6]{Temme})
\begin{equation}
{\; }_2 F_1 \left({ -n, \alpha+\beta+1 \atop 2\alpha+1} \Big{|} 1- z \right)
\sim \frac{\Gamma(2\alpha+1)}{\Gamma(\alpha-\beta)}\left[(n+1)(1-z)\right]^{-\alpha-\beta-1},
\end{equation}
for $\Re (\alpha+\beta)>-1$ and $|z|<1$ fixed. Thus, in view of the Mellin-Barnes integral representation \eqref{eq:intofPn} for $P_n$, we have
\begin{align}
&\lim_{n\to \infty} (-1)^nn!(n+1)^{\nu}P_n\left(\frac{x}{(n+1)(b^2-a^2)}\right) \nonumber \\
&=\frac{(b^2-a^2)^{\mu+\nu+1}}{a^\mu b^{2\nu} \Gamma(\mu+\nu+1)} \nonumber \\
&\quad~~ \times \frac{1}{2 \pi i}\int_{c-i\infty}^{c+i\infty}\frac{\Gamma(\mu+\nu+1)\Gamma(t)\Gamma(t+\nu)}{\Gamma(1+\mu-t)}\left(\frac{b^2 x}{b^2-a^2}\right)^{-t}\left(1-\frac{a^2}{b^2}\right)^{-t-\nu} \ud t \nonumber \\
&= \frac{(b^2-a^2)^{\mu+1}}{a^\mu} \frac{1}{2 \pi i}\int_{c-i\infty}^{c+i\infty}\frac{\Gamma(t)\Gamma(t+\nu)}{\Gamma(1+\mu-t)}x^{-t} \ud t, \nonumber
\end{align}
which is \eqref{eq:PnMHasy}. The interchange of limit and integral is justified by the fact that the gamma function is
exponentially small along the contour of integration and the dominated convergence theorem.

This completes the proof of Theorem \ref{thm:Plim}.
\end{proof}

\section{Recurrence relations}\label{sec:recurrence rel}
By general theory of mixed type multiple orthogonal polynomials (cf. \cite{AFM11}), it follows that both $Q_n$ and $P_n$ satisfy five-term recurrence relations. It is the aim of this section to calculate these recurrence coefficients explicitly.
We stress out that both $Q_n$ and $P_n$ are not polynomials.

\begin{proposition}\label{prop:recurrence}
For $n\geq 0$, the function $Q_n$ satisfies the following recurrence relation:
\begin{equation} \label{eq:Qnrecurrence}
    x Q_n(x) = a_{2,n}Q_{n+2}(x) + a_{1,n}Q_{n+1}(x)+a_{0,n}Q_n(x)+a_{-1,n}Q_{n-1}(x)+a_{-2,n}Q_{n-2}(x)
\end{equation}
with initial conditions $Q_{-2}(x)=Q_{-1}(x)=0$ and $Q_{0}(x)=\omega_{\mu,a}(x)$, where
\begin{align*}
a_{2,n}&=\left(\frac{a}{b^2-a^2}\right)^2, \\
a_{1,n}&=2(\mu+\nu+2n+2)\left(\frac{a}{b^2-a^2}\right)^2+\frac{\mu+n+1}{b^2-a^2}, \\
a_{0,n}&=(6n^2+6(\mu+\nu+1)n+(\mu+\nu+1)(\mu+\nu+2))\left(\frac{a}{b^2-a^2}\right)^2 \nonumber
\\& \qquad +\frac{3n^2+(4\mu+2\nu+3)n+(\mu+1)(\mu+\nu+1)}{b^2-a^2},
\\
a_{-1,n}&=n(\mu+\nu+n)\left(2(\mu+\nu+2)\left(\frac{b}{b^2-a^2}\right)^2-\frac{\nu+n}{b^2-a^2}\right), \\
a_{-2,n}&=(n-1)n(\mu+\nu+n-1)(\mu+\nu+n)\left(\frac{b}{b^2-a^2}\right)^2.
\end{align*}
Similarly, the dual function $P_n$ satisfies recurrence relation
\begin{equation} \label{eq:Pnrecurrence}
    x P_n(x) = b_{2,n}P_{n+2}(x) + b_{1,n}P_{n+1}(x)+b_{0,n}P_n(x)+b_{-1,n}P_{n-1}(x)+b_{-2,n}P_{n-2}(x)
\end{equation}
with initial conditions $P_{-2}(x)=P_{-1}(x)=0$ and
$P_{0}(x)=\frac{2(b^2-a^2)^{\mu+\nu+1}}{a^\mu b^\nu\Gamma(\mu+\nu+1)}\rho_{\nu,b}(x)$, where
\begin{equation}\label{eq:aibi}
b_{i,n}=a_{-i,n+i}, \qquad i\in\{-2,-1,0,1,2\}.
\end{equation}
\end{proposition}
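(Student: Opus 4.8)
The plan is to take the \emph{existence} of the five-term recurrences as given (this is the general theory of mixed type multiple orthogonal polynomials invoked just above, cf. \cite{AFM11}) and to pin down the coefficients using the biorthogonality \eqref{eq:orthoPQ} together with the explicit expansions \eqref{eq:Qnexpli} and \eqref{eq:Pnexpli}. Since $\{Q_m\}$ and $\{P_m\}$ are biorthogonal, once $xQ_n$ is expanded in the $Q_m$ the coefficient of $Q_m$ is precisely $\int_0^\infty xQ_n(x)P_m(x)\ud x$, so it suffices to identify which of these integrals are nonzero and to evaluate them. First I would record the shifted three-term relation $x\omega_{\mu+j,a}=\omega_{\mu+j+2,a}+\tfrac{\mu+j+1}{a}\omega_{\mu+j+1,a}$, obtained from \eqref{eq:recuromega1}, and the analogous $x\rho_{\nu+j,b}=\rho_{\nu+j+2,b}-\tfrac{\nu+j+1}{b}\rho_{\nu+j+1,b}$ from \eqref{eq:recurrho1}. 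Applying the first to \eqref{eq:Qnexpli} shows $xQ_n\in\mathrm{Span}\{\omega_{\mu,a},\dots,\omega_{\mu+n+2,a}\}$; since $P_m$ is orthogonal to $\omega_{\mu+j,a}$ for $j\le m-1$ by \eqref{eq:Pnortho}, the coefficient $a_{m-n,n}$ vanishes for $m\ge n+3$. Dually, $xP_m\in\mathrm{Span}\{\rho_{\nu,b},\dots,\rho_{\nu+m+2,b}\}$ while $Q_n$ is orthogonal to $\rho_{\nu+j,b}$ for $j\le n-1$ by \eqref{eq:orthocondQnequiv}, so the same coefficient vanishes for $m\le n-3$. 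This confirms the band structure and reduces everything to the five terms $m=n-2,\dots,n+2$; the boundary conventions $Q_{-2}=Q_{-1}=0$, $Q_0=\omega_{\mu,a}$ are read off directly from \eqref{eq:Qnexpli}.

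Next I would compute the three ``upper'' coefficients $a_{2,n},a_{1,n},a_{0,n}$ by comparing the coefficients of the highest scaled Bessel functions on the two sides of \eqref{eq:Qnrecurrence}, written in the basis $\{\omega_{\mu+k,a}\}$. Denoting by $q_{m,k}$ the coefficient of $\omega_{\mu+k,a}$ in $Q_m$ (read off from \eqref{eq:Qnexpli}), the left side $xQ_n$ has $\omega_{\mu+k,a}$-coefficient $q_{n,k-2}+\tfrac{\mu+k}{a}q_{n,k-1}$. Matching at $k=n+2$ involves only $Q_{n+2}$ and yields $a_{2,n}=q_{n,n}/q_{n+2,n+2}=\bigl(\tfrac{a}{b^2-a^2}\bigr)^2$; matching at $k=n+1$ and $k=n$ then brings in $Q_{n+1}$ and $Q_n$ and, with $a_{2,n}$ already known, determines $a_{1,n}$ and $a_{0,n}$. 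Each such step is a short manipulation of ratios of Gamma functions and binomial coefficients.

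For the two ``lower'' coefficients I would exploit the duality recorded in \eqref{eq:aibi}, which I would establish first. Because $\int_0^\infty Q_n P_m\ud x=\delta_{n,m}$, the coefficients of the $P$-recurrence are extracted symmetrically as $b_{i,n}=\int_0^\infty xP_n(x)Q_{n+i}(x)\ud x=\int_0^\infty xQ_{n+i}(x)P_n(x)\ud x=a_{-i,n+i}$, which is exactly \eqref{eq:aibi} and simultaneously proves \eqref{eq:Pnrecurrence} once the $Q$-recurrence is known. Consequently $a_{-1,n}=b_{1,n-1}$ and $a_{-2,n}=b_{2,n-2}$, so it suffices to compute the \emph{upper} coefficients $b_{2,m},b_{1,m}$ of the dual recurrence, obtained exactly as before by matching the highest $\rho$-modes in \eqref{eq:Pnrecurrence} via \eqref{eq:Pnexpli} and the shifted relation for $\rho$. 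The top-mode match gives $b_{2,m}=(m+1)(m+2)(\mu+\nu+m+1)(\mu+\nu+m+2)\bigl(\tfrac{b}{b^2-a^2}\bigr)^2$, and setting $m=n-2$ reproduces the stated $a_{-2,n}$; the next match gives $b_{1,m}$ and hence $a_{-1,n}$. The middle coefficient satisfies $a_{0,n}=b_{0,n}$, consistent with its symmetric appearance in both forms.

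The main obstacle is purely computational: carrying out the coefficient matching and simplifying the resulting sums of Gamma-ratios and binomials into the stated closed forms. The most delicate case is $a_{0,n}$, where three of the five terms contribute to the $\omega_{\mu+n,a}$-mode and the quadratic-in-$n$ polynomial structure must be disentangled; the coefficient $a_{-1,n}$ requires the same care in the dual computation of $b_{1,m}$. Throughout one must track the factors $\bigl(\tfrac{a^2-b^2}{a}\bigr)^j$ and $\bigl(\tfrac{a^2-b^2}{b}\bigr)^j$ so that the powers of $b^2-a^2$ collect correctly, but no conceptual difficulty arises beyond this bookkeeping, the band structure and the duality \eqref{eq:aibi} doing the structural work.
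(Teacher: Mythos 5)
Your plan is essentially the paper's own proof, with one useful variation. Like the paper, you take existence of the five-term recurrence from \cite{AFM11}, use the biorthogonality \eqref{eq:orthoPQ} to write $a_{i,n}=\int_0^\infty xQ_n(x)P_{n+i}(x)\ud x$ and $b_{i,n}=\int_0^\infty xP_n(x)Q_{n+i}(x)\ud x$ (whence \eqref{eq:aibi}, exactly as in the paper), and extract the coefficients by expanding $xQ_n$ in the basis $\{\omega_{\mu+k,a}\}$ via $x\omega_{\mu+j,a}=\omega_{\mu+j+2,a}+\frac{\mu+j+1}{a}\omega_{\mu+j+1,a}$ and matching modes; your band-structure argument is in fact spelled out more fully than in the paper, which delegates it to the cited general theory. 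The genuine difference is the treatment of the lower coefficients: the paper continues the recursive matching down to the modes $\omega_{\mu+n-1,a}$ and $\omega_{\mu+n-2,a}$ (four- and five-term matchings), whereas you compute the top two coefficients $b_{2,m},b_{1,m}$ of the dual recurrence in the $\rho$-basis and invoke $a_{-2,n}=b_{2,n-2}$, $a_{-1,n}=b_{1,n-1}$. This is sound and strictly lighter algebra, and your $b_{2,m}$ is correct and reproduces the stated $a_{-2,n}$.

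Be prepared, however, for the fact that your route will \emph{not} reproduce the stated $a_{-1,n}$, because the printed formula is in error (this is a defect of the statement, not of your method). Matching the $\rho_{\nu+m+1,b}$ mode, using $x\rho_{\nu+j,b}=\rho_{\nu+j+2,b}-\frac{\nu+j+1}{b}\rho_{\nu+j+1,b}$ and the coefficients from \eqref{eq:Pnexpli}, gives
\begin{equation*}
b_{1,m}=(m+1)(\mu+\nu+m+1)\left(2(\mu+\nu+2m+2)\left(\frac{b}{b^2-a^2}\right)^2-\frac{\nu+m+1}{b^2-a^2}\right),
\end{equation*}
hence
\begin{equation*}
a_{-1,n}=b_{1,n-1}=n(\mu+\nu+n)\left(2(\mu+\nu+2n)\left(\frac{b}{b^2-a^2}\right)^2-\frac{\nu+n}{b^2-a^2}\right),
\end{equation*}
i.e.\ the factor $2(\mu+\nu+2)$ in the proposition should read $2(\mu+\nu+2n)$; the two agree only for $n\leq 1$. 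This is not an artifact of the dual route: taking $\mu=0$, $\nu=1$, $a=1$, $b=2$, $n=2$, one has $Q_2=6\omega_{0,1}-18\omega_{1,1}+9\omega_{2,1}$ and $xQ_2=6\omega_{1,1}-30\omega_{2,1}+9\omega_{3,1}+9\omega_{4,1}$, and either direct matching in the $\omega$-basis against \eqref{eq:Qnexpli} or evaluation of $\int_0^\infty xQ_2(x)P_1(x)\ud x$ via \eqref{eq:intomegarho} forces $a_{-1,2}=62/3$, whereas the printed formula gives $10$; all the other printed coefficients pass this check. So your proof plan is complete and correct, but treat your computation of $a_{-1,n}$ as correcting the statement rather than as something to be reconciled with it.
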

\begin{proof}
By \eqref{eq:orthocondQnequiv}, \eqref{eq:Pnortho} and \eqref{eq:orthoPQ}, it is readily seen that
\begin{equation}
\int_0^\infty Q_n(x)P_m(x)\ud x=\delta_{n,m}, \qquad n,m\in\mathbb{Z}_+.
\end{equation}
Hence,
\begin{align*}
    a_{i,n} = \int_0^{\infty} x Q_n(x)P_{n+i}(x)\ud x, \quad
  b_{i,n}  = \int_0^{\infty} x P_n(x) Q_{n+i}(x) \ud x, \quad i\in\{-2,-1,0,1,2\},
    \end{align*}
which gives us \eqref{eq:aibi}.

The explicit formulas of recurrence coefficients follow directly from the explicit formula for $Q_n$ and
the three-term recurrence relation for $\omega_{\mu,a}$. Indeed, by setting
$$c_{j,n}= (-1)^n \binom{n}{j}\frac{\Gamma(\mu+\nu+1+n)}{\Gamma(\mu+\nu+1+j)}\left(\frac{a^2-b^2}{a}\right)^j,
~~j=0,1,\ldots,n,$$
we obtain from \eqref{eq:Qnexpli}, \eqref{eq:recuromega1} and \eqref{eq:Qnrecurrence} that
\begin{align*}
    x Q_n(x) & = \sum_{j=0}^n c_{j,n}x\omega_{\mu+j,a}(x)
    =\sum_{j=0}^n c_{j,n}\left(\omega_{\mu+j+2,a}(x)+\frac{\mu+j+1}{a}\omega_{\mu+j+1,a}(x)\right)
\\
&=c_{0,n}\frac{\mu+1}{a}\omega_{\mu+1,a}(x)+\sum_{j=2}^{n+1}\left(c_{j-2,n}+c_{j-1,n}\frac{\mu+j}{a}\right)
\omega_{\mu+j,a}(x)+c_{n,n}\omega_{\mu+n+2,a}(x)
\\
&=a_{2,n}\sum_{j=0}^{n+2} c_{j,n+2}\omega_{\mu+j,a}(x)+a_{1,n}\sum_{j=0}^{n+1} c_{j,n+1}\omega_{\mu+j,a}(x)
+a_{0,n}\sum_{j=0}^{n} c_{j,n}\omega_{\mu+j,a}(x)
\\
&\qquad + a_{-1,n}\sum_{j=0}^{n-1} c_{j,n-1}\omega_{\mu+j,a}(x)+a_{-2,n}\sum_{j=0}^{n-2} c_{j,n-2}\omega_{\mu+j,a}(x).
\end{align*}
Comparing the coefficients of $\omega_{\mu+n+i,a}$, $i\in\{2,1,0,-1,-2\}$ in the last equality then gives us $a_{i,n}$
recursively.

This completes the proof of Proposition \ref{prop:recurrence}.
\end{proof}

By \eqref{def:QnNotation} and \eqref{def:PnNotation}, we have that the polynomials $A_{n,i}$ and $B_{n,i}$, $i=1,2$ also satisfy the recurrence relations \eqref{eq:Qnrecurrence} and \eqref{eq:Pnrecurrence}, respectively.

\section{Products of two coupled random matrices}\label{sec:prod}
In this section, we explain in detail how the mixed type multiple orthogonal polynomials associated with the modified Bessel functions are related to products of two coupled random matrices in a general setting introduced by Liu in \cite{Liu16}.

We start with a coupled two-matrix model defined by the probability distribution
\begin{equation}\label{def:coupledmatrix}
\frac{1}{\widehat Z_n} \exp \left(-\beta \textrm{Tr}(X_1X_1^*+X_2^*X_2)+\textrm{Tr}(\Omega X_1 X_2 +(\Omega X_1 X_2)^*)\right)\ud X_1\ud X_2,
\end{equation}
over pairs of rectangular complex matrices $(X_1,X_2)$, each of size $L\times M$ and $M\times n$ respectively, where $\ud X_1$ and $\ud X_2$ are the flat complex Lebesgue measures on the entries of $X_1$ and $X_2$, and $\widehat Z_n$ is a normalization constant. Here $\beta>0$, and $\Omega$ is a fixed $n\times L$ complex matrix such that $\Omega\Omega^* < \beta^2$ which plays the role of coupling between $X_1$ and $X_2$.

If $L=n$ and $\Omega$ is a scalar matrix, the model \eqref{def:coupledmatrix} can be interpreted as the chiral two-matrix model \cite{ADOS07,OSborn04}, which was introduced in the context of quantum chromodynamics. In this case, an alternative formulation of the model is given in \eqref{eq:X1andX2}. Indeed, the pair $(X_1,X_2)$ therein is then distributed according to \eqref{def:coupledmatrix} with $L=n$, $\beta=\frac{1+\tau}{2\tau}$
and $\Omega=\frac{1-\tau}{2\tau}I_n$, where $I_n$ stands for the $n\times n$ identity matrix; see \cite{AS15,Liu16}.

Our interest lies in the singular values of the product matrix
$$\widehat Y=X_1X_2,$$
where the pair $(X_1,X_2)$ has the probability distribution \eqref{def:coupledmatrix}.
It comes out that the squared singular values of $\widehat Y$  are distributed according to a determinantal point process \cite{AS15,Liu16} over the positive real axis. The determinantal point process is a biorthogonal ensemble \cite{Bor} with joint probability density function (see \cite[Proposition 1.1]{Liu16})
\begin{equation}
\frac{1}{Z_n}\det \left[I_{\kappa}(2\alpha_i\sqrt{x_j})\right]_{i,j=1}^n
\det\left[x_j^{\frac{\nu+i-1}{2}}K_{\nu-\kappa+i-1}(2\beta\sqrt{x_j})\right]_{i,j=1}^n,
\end{equation}
with $I_\mu$ and $K_\nu$ being the modified Bessel functions of the first kind and the second kind, respectively, where $$\kappa:=L-n, \qquad \nu:=M-n,$$
$\alpha_i$, $i=1,\ldots,n$ are the singular values of coupling matrix $\Omega$, and $Z_n$ is a normalization constant explicitly known. Here, it is also assumed that $L,M\geq n$.

We now focus on the confluent case that all the singular values of $\Omega$ are the same, that is, $\alpha_i \to \alpha>0$.
The linear space spanned by the functions $x \mapsto I_{\kappa}(2\alpha_i\sqrt{x})$, $i=1,\ldots,n$, tends to
the linear space spanned by
\begin{equation}\label{eq:linear_span}
x \mapsto \frac{\partial^{j-1}}{\partial y^{j-1}}I_{\kappa}(2y \sqrt{x})|_{y=\alpha}, \quad j=1,\ldots,n.
\end{equation}
Using the recurrence relations \eqref{eq:recurI1} and \eqref{eq:recurI2} satisfied by the modified Bessel
functions of the first kind, it is easily seen that the resulting space is spanned by the functions
$x \mapsto x^{\frac{i-1}{2}}I_{\kappa+i-1}(2\alpha\sqrt{x})$, $i=1,\ldots,n$. Thus, a further simple algebraic calculation
implies that the joint probability density function for the squared singular values of $\widehat Y$ is given by
\begin{equation}\label{eq:jpdfcon}
\frac{1}{Z_n}\det \left[x_j^{\frac{\kappa+i-1}{2}}I_{\kappa+i-1}(2\alpha\sqrt{x_j})\right]_{i,j=1}^n
\det\left[x_j^{\frac{\nu-\kappa+i-1}{2}}K_{\nu-\kappa+i-1}(2\beta\sqrt{x_j})\right]_{i,j=1}^n,
\end{equation}
under the condition that the coupling matrix $\Omega$ has a single singular value $\alpha$. For the case $\kappa=0$, the result was first obtained by Akemann and Strahov \cite{AS15}.

From general properties of biorthogonal ensembles \cite{Bor}, it is known that the correlation kernel of determinantal point process \eqref{eq:jpdfcon}
is given by \eqref{def:Kn},
where for each $k = 0, 1, \ldots$, $\mathcal{Q}_k$ belongs to the linear span of $x^{\frac{\kappa+i}{2}}I_{\kappa+i}(2\alpha \sqrt{x})$, $i=0,\ldots,k$, while $\mathcal{P}_k$ belongs to
the linear span of $x^{\frac{\nu-\kappa+i}{2}}K_{\nu-\kappa+i}(2\beta \sqrt{x})$, $i=0,\ldots,k$ in such a way that the biorthogonality conditions \eqref{eq:biortho} hold.
By Theorems \ref{thm:Qnexpli} and \ref{thm:Pnexpli}, we actually have
$$ \mathcal{Q}_k(x)=Q_{k}^{\kappa,\nu-\kappa,\alpha,\beta}(x),\qquad
\mathcal{P}_k(x)=P_{k}^{\kappa,\nu-\kappa,\alpha,\beta}(x),\qquad k=0,1,\ldots,$$
recall the notations in \eqref{eq:notations}. For the special case that $\kappa=0$, the explicit formulas and recurrence
relations for $\mathcal{Q}_k$ and $\mathcal{P}_k$ can also be found in \cite{AS15} and coincide with ours up to some common constants, but without noting the multiple orthogonality.

Due to the connection to multiple orthogonal polynomials of mixed type, the point process \eqref{eq:jpdfcon} is a multiple orthogonal polynomial ensemble \cite{Kui10a,Kui10b}. This in particular implies the following RH characterization (\cite{DK,DKV08,Delvaux10}) of the correlation kernel \eqref{def:Kn}.
\begin{rhp} \label{rhp:Y}
We look for a $4\times 4$ matrix-valued function
$Y : \mathbb C \setminus \mathbb [0,+\infty) \to \mathbb C^{4 \times 4}$
satisfying
\begin{enumerate}
\item[\rm (1)] $Y$ is defined and analytic in $ \mathbb{C} \setminus [0,+\infty)$.
\item[\rm (2)] $Y$ has limiting values $Y_{\pm}$ on $(0,\infty)$,
where $Y_+$ ($Y_-$) denotes the limiting value from the upper
(lower) half-plane, and
\begin{equation}\label{defjumpmatrix0}
Y_{+}(x) = Y_{-}(x)
\begin{pmatrix} I_2 & W(x)\\
0 & I_2
\end{pmatrix}, \qquad  x \in \mathbb (0,+\infty),
\end{equation}
where  $W(x)$ is
the rank-one matrix (outer product of two vectors)
\begin{align*}
W(x) &=
\begin{pmatrix}
\omega_{\kappa,\alpha}(x) \\
\omega_{\kappa+1,\alpha}(x)
\end{pmatrix}
\begin{pmatrix}
\rho_{\nu-\kappa,\beta}(x) &
\rho_{\nu-\kappa+1,\beta}(x)
\end{pmatrix}\\
&=\begin{pmatrix}
\omega_{\kappa,\alpha}(x)\rho_{\nu-\kappa,\beta}(x) & \omega_{\kappa,\alpha}(x)\rho_{\nu-\kappa+1,\beta}(x) \\
\omega_{\kappa+1,\alpha}(x)\rho_{\nu-\kappa,\beta}(x) & \omega_{\kappa+1,\alpha}(x)\rho_{\nu-\kappa+1,\beta}(x)
\end{pmatrix},
\end{align*}
where $\omega_{\mu,a}$ and $\rho_{\nu,b}$ are given in \eqref{def:omega} and \eqref{def:rho}, respectively.

\item[\rm (3)] As $z\to\infty$, we have that
\begin{equation}\label{asymptoticconditionY0}
    Y(z) = \left(I_4+\frac{Y_1}{z}+O\left(\frac{1}{z^2}\right)\right)
    \diag(z^{n_1},z^{n_2},z^{-n_1},z^{-n_2}).
\end{equation}
with $n_1=\lfloor \frac{n-1}{2} \rfloor+1$ and $n_2=\lfloor \frac{n-2}{2} \rfloor+1$.
\end{enumerate}
\end{rhp}
Combined with appropriate local behavior near the origin that depends on the parameters
$\kappa,\nu$, it is shown in \cite{DK} that the RH problem for $Y$
has a unique solution and the correlation kernel \eqref{def:Kn} admits the following representation in terms of the solution of the RH problem:
\begin{multline}\label{kernel representation}
K_{n}(x,y)
\\=\frac{1}{2\pi i(x-y)}\begin{pmatrix}0 &0 & \rho_{\nu-\kappa,\beta}(y)&
\rho_{\nu-\kappa+1,\beta}(y)\end{pmatrix} Y_{+}^{-1}(y)Y_{+}(x)
\begin{pmatrix}
\omega_{\kappa,\alpha}(x) \\ \omega_{\kappa+1,\alpha}(x) \\ 0 \\ 0
\end{pmatrix}.
\end{multline}
The representation \eqref{kernel representation} is based on the Christoffel-Darboux
formula for multiple orthogonal polynomials of mixed type; see also \cite{AFM11}.

The solution $Y(z)$ to the RH problem actually admits the following partition:
\begin{equation}
Y(z)=\begin{pmatrix}
Y_{1,1}(z) & Y_{1,2}(z) \\
Y_{2,1}(z) & Y_{2,2}(z)
\end{pmatrix},
\end{equation}
where each block $Y_{i,j}$ is of size $2 \times 2$. The blocks $Y_{1,1}$ and $Y_{2,1}$ are built in terms of mixed type
multiple orthogonal polynomials, while the blocks $Y_{1,2}$ and $Y_{2,2}$ contain certain Cauchy transforms thereof;
see \cite{DK} for the precise description. A further connection between mixed type multiple orthogonal polynomials and the products of coupled random matrices is that
\begin{equation}
\mathbb{E}\left[\prod_{k=1}^n(z-x_k)\right]=\det Y_{1,1}(z),
\end{equation}
where the expectation $\mathbb{E}$ is taken with respect to \eqref{eq:jpdfcon}, and the polynomial is called average characteristic polynomial; see \cite[Theorem 1.2]{Delvaux10}.

Based on an integral representation of the correlation kernel $K_n$, the hard edge scaling limit as well as its transition  have been established in \cite{AS15b,AS15,Liu16}. The hard edge scaling limit belongs to the Meijer G-kernels found in the products of independent random matrices \cite{Kuijlaars-Zhang14}. The limiting mean distribution and local universality of the squared singular values of $\widehat Y$, however, remain open. The interpretation of \eqref{eq:jpdfcon} as a multiple orthogonal polynomial ensemble then provides an alternative way to tackle this problem by performing Deift/Zhou steepest descent analysis \cite{Deift99book} for the associated RH problem $Y$. The study of this aspect will be the topics of future research.

\section*{Acknowledgment}
The author would like to thank Dang-Zheng Liu comments on the manuscript. The work was partially supported by The Program for Professor of Special Appointment (Eastern Scholar) at Shanghai Institutions of Higher Learning (No. SHH1411007), by National Natural Science Foundation of China (No. 11501120) and by Grant EZH1411513 from Fudan University.


\end{document}